\newtheorem{theorem}{Theorem}[section]
\newtheorem{corollary}[theorem]{Corollary}
\newtheorem{lemma}[theorem]{Lemma}
\newtheorem{proposition}[theorem]{Proposition}
\theoremstyle{definition}
\newtheorem{remark}[theorem]{Remark}
\newtheorem{example}[theorem]{Example}
\def\query#1{\setlength\marginparwidth{80pt}%
\marginpar{\raggedright\fontsize{10}{10}\selectfont\itshape
\hrule\smallskip
{\textcolor{red}{#1}}\par\smallskip\hrule}}
\newcommand{\one}{\mathbf{1}}
\newcommand{\ran}{\mathrm{ran} \,}
\newcommand{\spn}{\mathrm{span}}
\newcommand{\vr}{\varepsilon}
\newcommand{\ball}{\mathbf{B}}
\newcommand{\sphere}{\mathbf{S}}
\newcommand{\ep}{\mathrm{EP}}
\newcommand{\oep}{\mathrm{OEP}}
\newcommand{\N}{\mathbb{N}}
\newcommand{\R}{\mathbb{R}}
\newcommand{\csch}{{\mathrm{CSCH}}}
\newcommand{\sch}{{\mathrm{SCH}}}
\newcommand{\ch}{{\mathrm{CH}}}
\newcommand{\so}{{\mathrm{S}}}
\newcommand{\triple}[1]{{\left\vert\kern-0.25ex\left\vert\kern-0.25ex\left\vert #1 
    \right\vert\kern-0.25ex\right\vert\kern-0.25ex\right\vert}}
\begin{document}

\numberwithin{equation}{section}


\title[Order extreme points]{Order extreme points and solid convex hulls}

\author{T.~Oikhberg and M.A.~Tursi}

\address{ Dept.~of Mathematics, University of Illinois, Urbana IL 61801, USA}
\email{oikhberg@illinois.edu, gramcko2@illinois.edu}

\date{\today}

\subjclass[2010]{46B22, 46B42}

\keywords{Banach lattice, extreme point, convex hull, Radon-Nikod{\'y}m Property}

\dedicatory{To the memory of Victor Lomonosov}


\maketitle

\parindent=0pt
\parskip=3pt

\begin{abstract}
We consider the ``order'' analogues of some classical notions of Banach space geometry: extreme points and convex hulls. A Hahn-Banach type separation result is obtained, which allows us to establish an ``order'' Krein-Milman Theorem. We show that the unit ball of any infinite dimensional reflexive space contains uncountably many order extreme points, and investigate the set of positive norm-attaining functionals. Finally, we introduce the ``solid'' version of the Krein-Milman Property, and show it is equivalent to the Radon-Nikod{\'y}m Property.
\end{abstract}

\maketitle
\thispagestyle{empty}

\section{Introduction}\label{s:intro}

At the very heart of Banach space geometry lies the study of three interrelated subjects: (i) separation results (starting from the Hahn-Banach Theorem), (ii) the structure of extreme points, and (iii) convex hulls (for instance, the Krein-Milman Theorem on convex hulls of extreme points).
Certain counterparts of these notions exist in the theory of Banach lattices as well.
For instance, there are positive separation/extension results; see e.g. \cite[Section 1.2]{AB}.
One can view solid convex hulls as lattice analogues of convex hulls; these objects have been studied, and we mention some of their properties in the paper. However, 
no unified treatment of all three phenomena listed above has been attempted.

In the present paper, we endeavor to investigate the lattice versions of (i), (ii), and (iii) above.
We introduce the order version of the classical notion of an extreme point: if $A$ is a subset of a Banach lattice $X$, then $a \in A$ is called an \emph{order extreme point} of $A$ if for all $x_0, x_1 \in A$ and $t \in (0,1)$ the inequality $a \leq (1-t) x_0 + t x_1$ implies $x_0 = a = x_1$.
Note that, in this case, if $x \geq a$ and $x \in A$, then $x = a$ (write $a \leq (x+a)/2$).
%

Throughout, we work with real spaces. 
We will be using the standard Banach lattice results and terminology (found in, for instance, \cite{AB}, \cite{M-N} or \cite{Sch}).
We also say that a subset of a Banach lattice is \textit{bounded} when it is norm bounded, as opposed to order bounded.

Some special notation is introduced in Section \ref{s:definitions}. In the same section, we establish some basic facts about order extreme points and solid hulls.
In particular, we note a connection between order and ``canonical'' extreme points (Theorem \ref{t:connection}).

In Section \ref{s:separation} we prove a ``Hahn-Banach'' type result (Proposition \ref{p:separation2}), involving separation by positive functionals. This result is used in Section \ref{s:KM} to establish a ``solid'' analogue of the Krein-Milman Theorem.
We prove that solid compact sets are solid convex hulls of their order extreme points (see Theorem \ref{t:KM}).
A ``solid'' Milman Theorem is also proved (Theorem \ref{t:order-milman}).


In Section \ref{s:examples} we study order extreme points in $AM$-spaces. For instance, we show that, for an AM-space $X$, the following three statements are equivalent: (i) $X$ is a $C(K)$ space; (ii) the unit ball of $X$ is the solid convex hull of finitely many of its elements; (iii) the unit ball of $X$ has an order extreme point (Propositions \ref{p:describe_AM} and \ref{p:only_C(K)_has_OEP}).

Further in Section \ref{s:examples} we investigate norm-attaining positive functionals.
Functionals attaining their maximum on certain sets have been investigated since the early days of functional analysis; here we must mention V.~Lomonosov's papers on the subject (see e.g.~the excellent summary \cite{ArL},
and the references contained there).
In this paper, we show that a separable AM-space is a $C(K)$ space iff any positive functional on it attains its norm (Proposition \ref{p:only_C(K)_attain_norm}). On the other hand, an order continuous lattice is reflexive iff every positive operator on it attains its norm (Proposition \ref{p:functional_OC}).

In Section \ref{s:how_many_extreme_points} we show that the unit ball of any
reflexive infinite-dimensional Banach lattice has uncountably many order extreme points
(Theorem \ref{t:uncount_many}).

Finally, in Section \ref{s:SKMP} we define the ``solid'' version of the Krein-Milman Property, and show that it is equivalent to the Radon-Nikodym Property (Theorem \ref{t:RNP}).

To close this introduction, we would like to mention that related ideas have been explored before, in other branches of functional analysis.
In the theory of $C^*$ algebras, and, later, operator spaces, the notions of ``matrix'' or ``$C^*$'' extreme points and convex hulls have been used. The reader is referred to e.g.~\cite{EWe}, \cite{EWi}, \cite{FaM}, \cite{WeWi} for more information; for a recent operator-valued separation theorem, see \cite{Mag}.

\section{Preliminaries}\label{s:definitions}

In this section, we introduce the notation commonly used in the paper, and mention some basic facts.

The closed unit ball (sphere) of a Banach space $X$ is denoted by $\ball(X)$ (resp.~$\sphere(X)$).
%
If $X$ is a Banach lattice, and $C \subset X$, write $C_+ = C \cap X_+$, where $X_+$ stands for the positive cone of $X$.
Further, we say that $C \subset X$ is \emph{solid} if,
for $x \in X$ and $z \in C$, the inequality $|x| \leq |z|$ implies the inclusion $x \in C$.
In particular, $x \in X$ belongs to $C$ if and only if $|x|$ does.
Note that any solid set is automatically \emph{balanced}; that is, $C = -C$.

Restricting our attention to the positive cone $X_+$, we say that $C \subset X_+$ is \emph{positive-solid} if for any $x\in X_+$, the existence of $z \in C$ satisfying $x \leq z$ implies the inclusion $x \in C$.

We will denote the set of order extreme points of $C$ (defined in Section \ref{s:intro}) by $\oep(C)$;
the set of ``classical'' extreme points is denoted by $\ep(C)$.

\begin{remark}\label{r:G_delta}
It is easy to see that the set of all extreme points of a compact metrizable set is $G_\delta$. The same can be said for the set of order extreme points of $A$, whenever $A$ is a closed  solid bounded subset of a separable reflexive Banach lattice.
Indeed, then the weak topology is induced by a metric $d$. For each $n$ let $F_n$ be the set of all $x \in A$ for which there exist $x_1, x_2, \in A$ with $x \leq (x_1 + x_2)/2$, and $d(x_1, x_2) \geq 1/n$. By compactness, $F_n$ is closed. Now observe that $\cup_n F_n$ is the complement of the set of all order extreme points.
\end{remark}

Note that every order extreme point is an extreme point in the usual sense, but the converse is not true:
for instance, $\one_{(0,1)}$ is an extreme point of $\ball(L_\infty(0,2))_+$, but not its order extreme point. However, a connection between ``classical'' and order extreme points exists:

\begin{theorem}\label{t:connection}
Suppose $A$ is a solid subset of a Banach lattice $X$.
Then $a$ is an extreme point of $A$ if and only if $|a|$ is its order extreme point.
\end{theorem}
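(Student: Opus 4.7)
My plan is to prove both implications via Riesz decomposition, using solidity of $A$ throughout to keep decomposed pieces inside $A$. For the easy direction, $|a| \in \oep(A) \Rightarrow a \in \ep(A)$: suppose $a = (1-t)x_0 + tx_1$ with $x_i \in A$. The triangle inequality gives $|a| \leq (1-t)|x_0| + t|x_1|$, with $|x_i| \in A$ by solidity, so order-extremeness of $|a|$ forces $|x_0| = |x_1| = |a|$ together with equality in the triangle inequality. Taking positive/negative parts of $a = (1-t)x_0 + tx_1$ then yields $a^{\pm} = (1-t)x_0^{\pm} + tx_1^{\pm}$; Riesz-decomposing each $x_i^{\pm} \leq |x_i| = a^+ + a^-$ into pieces dominated by $a^+$ and by $a^-$, and using $a^+ \wedge a^- = 0$ to kill the wrong-sign pieces, one concludes $x_i^+ = a^+$ and $x_i^- = a^-$, so $x_i = a$.

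The reverse implication I would split in two. First, a lemma for positive elements: if $b \geq 0$ belongs to $\ep(A)$, then $b \in \oep(A)$. Given $b \leq (1-t)x_0 + tx_1$, Riesz-decompose $b$ within $(1-t)x_0^+ + tx_1^+$ as $b = (1-t)u_0 + tu_1$ with $0 \leq u_i \leq x_i^+$; solidity gives $u_i \in A$, and extremeness forces $u_i = b$, so $b \leq x_i^+$. The estimate $|2b - x_i^+| \leq x_i^+$ (immediate from $0 \leq b \leq x_i^+$) places $z_i := 2b - x_i^+$ in $A$, and the identity $b = (x_i^+ + z_i)/2$ together with extremeness gives $x_i^+ = b$. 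Plugging back, $b \leq (1-t)x_0 + tx_1 = b - (1-t)x_0^- - tx_1^-$ forces $x_i^- = 0$, so $x_i = b$.

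Second, the main step: $a \in \ep(A) \Rightarrow |a| \in \ep(A)$. Given $|a| = (1-t)y_0 + ty_1$, I would rerun the Riesz argument from the easy direction to obtain $y_i^+ \geq |a|$; then $y_i^- \wedge y_i^+ = 0$ yields $y_i^- \perp |a|$. Setting $\epsilon_i := y_i^+ - |a| \geq 0$ and substituting into $(1-t)y_0 + ty_1 = |a|$ gives $(1-t)\epsilon_0 + t\epsilon_1 = (1-t)y_0^- + ty_1^-$; the right-hand side is disjoint from $|a|$, so the left is too, forcing $\epsilon_i \perp |a|$. Consequently $a^+, a^-, \epsilon_i, y_i^-$ are pairwise disjoint, so $\tilde y_i := y_i - 2a^-$ satisfies $|\tilde y_i| = |y_i|$ and hence lies in $A$ by solidity, and $(1-t)\tilde y_0 + t\tilde y_1 = |a| - 2a^- = a$. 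Extremeness of $a$ forces $\tilde y_i = a$, so $y_i = |a|$; applying the lemma to $b = |a|$ then upgrades extremeness to order-extremeness. The main obstacle is establishing the disjointness $\epsilon_i \perp |a|$, which substitutes for a band-projection argument and avoids any Dedekind-completeness assumption on $X$.
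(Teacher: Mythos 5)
Your argument is correct, but it is organized quite differently from the paper's. For the hard direction the paper first reduces to showing that $|a|$ is order extreme in $A_+$, and then runs a quasi-unit (component) argument: from $|a|\le tx+(1-t)y$ it manufactures elements $z_+,z_-,z,z'\in A$, applies extremeness of $a$ to the identity $a=tz'+(1-t)z$ to conclude that $a_+$ and $a_-$ are quasi-units of $x$ and $y$, and finally kills $s=x-|a|$ by writing $a$ as the midpoint of $a\pm s$. You replace all of this by Riesz decomposition, and you isolate a genuinely useful intermediate statement: a \emph{positive} extreme point of a solid set is automatically order extreme. Your proof of that lemma (decompose $b$ inside $(1-t)x_0^++tx_1^+$ to get $b\le x_i^+$, reflect via $z_i=2b-x_i^+$ to get $x_i^+=b$, then observe that the negative parts must vanish) is shorter and more transparent than the paper's quasi-unit computation, and it makes the logical structure cleaner: $a$ extreme $\Rightarrow$ $|a|$ extreme $\Rightarrow$ $|a|$ order extreme. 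Your easy direction is essentially the paper's, with the disjointness bookkeeping done slightly differently.

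The one place you owe more detail is the claim $y_i^+\ge|a|$ in the main step. It does not come from literally rerunning the easy direction's Riesz argument: that argument starts from the identity $|x_i|=|a|$ (supplied by order extremeness of $|a|$) and produces \emph{upper} bounds $x_i^{\pm}\le a^{\pm}$, whereas here you have no a priori control on $|y_i|$ and you need a \emph{lower} bound. The claim is nevertheless true, by a two-stage decomposition that again feeds off extremeness of $a$: from $|a|\le(1-t)y_0^++ty_1^+$, Riesz gives $|a|=(1-t)u_0+tu_1$ with $0\le u_i\le y_i^+$; from $a^+\le(1-t)u_0+tu_1$, Riesz gives $a^+=(1-t)p_0+tp_1$ with $0\le p_i\le u_i$, and then $a^-=(1-t)q_0+tq_1$ with $q_i=u_i-p_i\ge0$. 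Now $|p_i-q_i|\le p_i+q_i=u_i\le y_i^+\le|y_i|$ puts $p_i-q_i$ in $A$, and $a=(1-t)(p_0-q_0)+t(p_1-q_1)$ together with extremeness of $a$ forces $p_i-q_i=a$; since $2p_i=u_i+a\ge0$ and $2q_i=u_i-a\ge0$, you get $u_i\ge a\vee(-a)=|a|$, hence $u_i=|a|$ and $y_i^+\ge|a|$. With this inserted, the rest of your disjointness argument ($\epsilon_i\perp|a|$, $y_i^-\perp|a|$, $\epsilon_i\perp y_i^-$, so $\tilde y_i=y_i-2a^-$ has $|\tilde y_i|=|y_i|$, and $(1-t)\tilde y_0+t\tilde y_1=a$) checks out, and the theorem follows.
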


The proof of Theorem \ref{t:connection} uses the notion of a quasi-unit.
Recall \cite[Definition 1.2.6]{M-N} that for $e,v \in X_+$, $v$ is a \textit{quasi-unit} of $e$ if $v \wedge (e-v) = 0$.  
This terminology is not universally accepted: the same objects can be referred to as \textit{components} \cite{AB}, or \textit{fragments} \cite{PR}.

\begin{proof}
Suppose $|a|$ is order extreme.  Let $0<t<1$ be such that $a = tx+(1-t)y$.  Then since $A$ is solid and  $|a| \leq t|x| + (1-t)|y|$, one has $|x| = |y| = |a|$. Thus the latter inequality is in fact equality. Thus $|a|+a = 2a_+ = 2t x_+ +2(1-t)y_+$, so $a_+ = tx_+ +(1-t)y_+$.  Similarly, $a_- = tx_- + (1-t)y_-$.  It follows that $x_+ \perp y_-$ and $x_- \perp y_+$.  Since $ x_+ + x_- =|x| = |y| = y_+ +y_-  $, we have that $x_+ = x_+ \wedge (y_+ +y_-) = x_+\wedge y_+ + x_+ \wedge y_-$ (since $y_+, y_-$ are disjoint).  Now since $x_+ \perp y_-$, the latter is just $x_+ \wedge y_+$, hence $x_+ \leq y_+$.  By similar argument one can show the opposite inequality to conclude that $x_+ = y_+$, and likewise $x_-=y_-$, so $x=y=a$.

Now suppose $a$ is extreme. It is sufficient to show that $|a| $ is order extreme for $A_+$.  Indeed, if $|a| \leq tx + (1-t)y$ (with $0 \leq t \leq 1$ and $x, y \in A$), then $|a| \leq t|x|+(1-t)|y|$. As $|a|$ is an order extreme point of $A_+$, we conclude  that $|x| = |y| = |a|$, so $|a| = tx+(1-t)y= t|x|+(1-t)|y|$.  The latter implies that $x_- = y_-=0$, hence $x=|x| =|a|=|y|=y$. 

Therefore, suppose $|a| \leq tx + (1-t)y$ with $0 \leq t \leq 1$, and $x,y \in A_+$. First show that $|a|$ is a quasi-unit of $x$ (and by similar argument of $y$). To this end, note that $a_+ - tx \wedge a_+ \leq (1-t)y\wedge a_+$.  Since $A$ is solid, 
\[A \ni z_+:= \frac{1}{1-t}( a_+ -tx\wedge a_+ ) \]
and similarly, since  $a_- - tx \wedge a_- \leq (1-t)y\wedge a_-$, 
\[A \ni z_-:= \frac{1}{1-t}( a_- -tx\wedge a_-) \]

These inequalities imply that $z_+ \perp z_-$, so they correspond to the positive and negative parts of some $z = z_+ -z_-$.  Also, $z\in A$ since $|z| \leq |a|$. Now $a_+ = t(x\wedge \frac{a_+}{t}) +(1-t) z_+$ and $a_- = t(x\wedge \frac{a_-}{t}) +(1-t) z_+$.  In addition, $|x\wedge\frac{a_+}{t} - x\wedge\frac{a_-}{t}| \leq x$, so since $A$ is solid,
\[z':= x\wedge\frac{a_+}{t} - x\wedge\frac{a_-}{t} \in A. \]
Therefore   $ a=a_+ -a_- = tz'+(1-t)z$.  Since $a$ is an extreme point, $a=z$, hence \[(1-t)z_+ =(1-t) a_+ =   a_+ -tx\wedge a_+ \] so $tx \wedge a_+ = ta_+ $ which implies that $(t(x-a_+))\wedge((1-t)a_+) = 0$. As $0<t<1$, we have that $a_+$ (and likewise $a_-$) is a quasi-unit of $x$ (and similarly of $y$).  Thus $|a|$ is a quasi-unit of $x$ and of $y$. 

Now let $s = x-|a|$.  Then $a+s, a-s \in A$, since $|a \pm s|= x$.  We have \[ a = \frac{a-s}{2} +\frac{a+s}{2}, \]  but since $a$ is extreme, $s$ must be $0$.  Hence $x=|a|$, and similarly $y=|a|$.
\end{proof}

The situation is different if $A$ is a positive-solid set: the paragraph preceding Theorem \ref{t:connection} shows that $A$ can have extreme points which are not order extreme. If, however, a positive-solid set satisfies certain compactness conditions, then some connections between extreme and order extreme points can be established; see 
Proposition \ref{p:under_oep}, and the remark following it.

If $C$ is a subset of a Banach lattice $X$, denote by $\so(C)$ the \emph{solid hull} of $C$, which is the smallest solid set containing $C$.
It is easy to see that $\so(C)$ is the set of all $z \in X$ for which there exists $x \in C$ satisfying $|z| \leq |x|$.
Clearly $\so(C) = \so(|C|)$, where $|C| = \{|x| : x \in C\}$.
Further, we denote by $\ch(C)$ the \emph{convex hull} of $C$. For future reference, observe:

\begin{proposition}\label{p:interchange}
If $X$ is a Banach lattice, then $\so(\ch(|C|)) = \ch(\so(C))$ for any $C \subset X$.
\end{proposition}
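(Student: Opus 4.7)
The plan is to prove the two inclusions separately. For $\ch(\so(C)) \subseteq \so(\ch(|C|))$, which is the routine direction, I would take a convex combination $z = \sum_{i=1}^n t_i z_i$ with $z_i \in \so(C)$, choose $x_i \in C$ with $|z_i| \leq |x_i|$, and apply the lattice triangle inequality to obtain $|z| \leq \sum_i t_i |z_i| \leq \sum_i t_i |x_i|$. Since $\sum_i t_i |x_i| \in \ch(|C|)$, this exhibits $z$ as an element of $\so(\ch(|C|))$.

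The reverse inclusion $\so(\ch(|C|)) \subseteq \ch(\so(C))$ is the substantive direction. Given $z$ with $|z| \leq \sum_{i=1}^n t_i |x_i|$, where $x_i \in C$, $t_i \geq 0$, and $\sum_i t_i = 1$, the task is to split $z$ itself (not merely its modulus) along the combination, that is, to write $z = \sum_i t_i z_i$ with $|z_i| \leq |x_i|$. The right tool is a \emph{signed} version of the Riesz decomposition property: if $|z| \leq a_1 + \cdots + a_n$ in a Riesz space with each $a_i \geq 0$, then $z = s_1 + \cdots + s_n$ for some $s_i$ with $|s_i| \leq a_i$. Granting this, I apply it with $a_i = t_i |x_i|$, discard indices where $t_i = 0$, and set $z_i := s_i / t_i$; then $|z_i| \leq |x_i|$, so $z_i \in \so(C)$, and $z = \sum_i t_i z_i$ lies in $\ch(\so(C))$.

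The only technical point I expect to merit attention is the signed Riesz decomposition, which I would derive from the classical positive Riesz decomposition as follows: apply the latter to $z_+ \leq \sum_i a_i$ to write $z_+ = \sum_i u_i$ with $0 \leq u_i \leq a_i$, and similarly $z_- = \sum_i v_i$ with $0 \leq v_i \leq a_i$. Setting $s_i := u_i - v_i$ gives $\sum_i s_i = z_+ - z_- = z$ and, using $|u - v| = u \vee v - u \wedge v \leq u \vee v$, the bound $|s_i| \leq u_i \vee v_i \leq a_i$, as required. Beyond this, the argument is pure bookkeeping; no further obstacle is anticipated.
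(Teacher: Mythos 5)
Your proof is correct, but it reaches the substantive inclusion by a genuinely different route than the paper. You reduce the splitting of $z$ to the Riesz decomposition property: apply the positive decomposition separately to $z_+ \leq \sum_i a_i$ and $z_- \leq \sum_i a_i$, then recombine as $s_i = u_i - v_i$ with the bound $|s_i| \leq u_i \vee v_i \leq a_i$. All of these steps check out, and the signed statement you isolate as the ``technical point'' is in fact precisely the Riesz Decomposition Property in its standard signed form (for positive $a_i$ one has $\sum_i a_i = \bigl|\sum_i a_i\bigr|$, so the classical statement in Aliprantis--Burkinshaw, one of the paper's own references, applies verbatim); you could quote it outright rather than re-derive it. The paper instead proves the needed decomposition from scratch by induction on $n$, peeling off the last summand via $z_n = \frac{1}{a_n}\bigl(|x| - (\sum_1^{n-1} a_i y_i) \wedge |x|\bigr)$ and only performing the signing at the very end by setting $w_i = z_i \wedge (x_+/a_i) - z_i \wedge (x_-/a_i)$. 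The two approaches trade off as follows: yours is shorter, makes the underlying mechanism (classical Riesz decomposition plus a one-line signing trick) transparent, and cleanly separates the positive combinatorics from the sign bookkeeping; the paper's is self-contained at the level of elementary lattice identities but essentially re-derives the same classical fact inside the induction, which makes it longer and harder to parse. No gap in your argument.
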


\begin{proof}
Let $x\in \ch(\so(C))$.  Then $x = \sum a_iy_i,$ where $\sum a_i = 1, a_i > 0$, and $|y_i| \leq |k_i|$ for some $k_i \in C$.  Then \[|x| \leq \sum a_i|y_i| \leq \sum a_i |k_i| \in \ch(|C|), \] so $x\in \so(\ch(|C|)).$ If $x\in \so(\ch(|C|))$, then 
\[|x| \leq \sum_1^n a_i y_i,\quad y_i \in |C|,\quad 0 < a_i, \quad \sum a_i = 1. \] 

We use induction on $n$ to prove that $x \in \ch(\so(C))$.
If $n= 1$, $x\in \so(C)$ and we are done.  Now, suppose we have shown that if $|x| \leq \sum_1^{n-1} a_iy_i$ then there are $z_1,...,z_{n-1} \in \so(C)_+$ such that $|x|= \sum_1^{n-1}a_iz_i$.  
From there, we have that 
\[|x| = (\sum_1^n a_i y_i)\wedge |x| \leq (\sum_1^{n-1} a_iy_i)\wedge |x| + (a_ny_n)\wedge |x|. \]
Now \[ 0 \leq |x| - (\sum_1^{n-1} a_iy_i)\wedge |x| \leq a_n(y_n\wedge \frac{|x|}{a_n}). \] 

Let $z_n :=\frac{1}{a_n}(|x| - (\sum_1^{n-1} a_iy_i)\wedge |x|)$.  By the above, $z_n \in \so(C)_+$. Furthermore,
\[\frac{1}{1-a_n}(|x| \wedge \sum_1^{n-1}a_iy_i) \leq \sum_1^{n-1}  \frac{a_i}{1-a_n} y_i \in \ch(|C|), \]
so by induction there exist $z_1,..,z_{n-1} \in \so(C)_+$ such that 
\[ |x|\wedge( \sum_1^{n-1}a_iy_i) = \sum_1^{n-1}  \frac{a_i}{1-a_n} z_i \]
Therefore $|x| = \sum_1^n a_iz_i$.   Now for each $n$, $a_iz_i \leq |x|$, so
$|x| = \sum \big( (a_iz_i) \wedge|x|\big)$, and 
\[a_iz_i = a_iz_i\wedge x_+ +a_iz_i\wedge x_- = a_i( z_i\wedge(\frac{x_+}{a_i}) + z_i\wedge(\frac{x_-}{a_i}) ).\]

Let $w_i =  z_i\wedge(\frac{x_+}{a_i}) - z_i\wedge(\frac{x_-}{a_i})$.  Note that $|w_i| = z_i$, so $w_i \in \so(C)$.  It follows that $x= \sum a_iw_i \in \ch(\so(C))$.
\end{proof}

For $C \subset X$ (as before, $X$ is a Banach lattice) we define 
the \emph{solid convex hull} of $C$ to be the smallest convex, solid set containing $C$, and denote it by $\sch(C)$; the norm (equivalently, weak) closure of the latter set is denoted by $\csch(C)$, and referred to as the \emph{closed solid convex hull} of $C$.

\begin{corollary}\label{c:equal-sch}
	Let $C\subseteq X$.  Then\begin{enumerate}
		\item  $\sch(C) = \ch(\so(C)) = \sch(|C|)$, and consequently, $\csch(C) = \csch(|C|)$.  
		\item If $C \subseteq X_+$, then $\sch(C) = \so(\ch(C))$.
	\end{enumerate}
\end{corollary}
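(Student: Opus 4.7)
The plan is to deduce both parts of the corollary essentially from Proposition \ref{p:interchange} together with the basic observations that $\so(C) = \so(|C|)$ and that the solid hull of a set is automatically solid.

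For part (1), I would first verify the inclusion $\sch(C) \supseteq \ch(\so(C))$. Since $\sch(C)$ is solid and contains $C$, it must contain $\so(C)$; being convex, it then contains $\ch(\so(C))$. For the reverse inclusion, I would use Proposition \ref{p:interchange} to rewrite $\ch(\so(C)) = \so(\ch(|C|))$, which is visibly solid as a solid hull. Since it is also convex (as a convex hull of a set turns out to equal a convex combination, but more directly $\ch(\so(C))$ is obviously convex) and contains $C$, it contains the smallest such set $\sch(C)$. The equality $\sch(C) = \sch(|C|)$ then follows from the identity $\so(C) = \so(|C|)$ noted just before Proposition \ref{p:interchange}: both $\sch(C)$ and $\sch(|C|)$ equal $\ch(\so(C)) = \ch(\so(|C|))$. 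Taking norm closures on both sides immediately yields $\csch(C) = \csch(|C|)$.

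For part (2), when $C \subseteq X_+$ one has $|C| = C$, so Proposition \ref{p:interchange} reads $\so(\ch(C)) = \ch(\so(C))$, and by part (1) the right-hand side is $\sch(C)$.

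I do not anticipate a serious obstacle, since the real work has already been done in Proposition \ref{p:interchange}; the main thing to be careful about is to justify that $\ch(\so(C))$ is both convex (trivial) and solid (this is exactly where one invokes Proposition \ref{p:interchange} to rewrite it as a solid hull), so that it qualifies as a candidate in the definition of $\sch(C)$.
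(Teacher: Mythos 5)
Your proof is correct and follows essentially the same route as the paper: both inclusions for $\sch(C) = \ch(\so(C))$ are obtained exactly as in the paper's argument, with Proposition \ref{p:interchange} supplying the solidity of $\ch(\so(C))$, and parts (1) and (2) then follow from $\so(C)=\so(|C|)$ and the case $|C|=C$. No gaps.
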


\begin{proof}  
(1) Suppose $C\subseteq D$, where $D$ is convex and solid.  Then $\ch(\so(C)) \subseteq D$.
Consequently, $\ch(\so(C)) \subset \sch(C)$.
On the other hand, by Proposition \ref{p:interchange}, $\ch(\so(C))$ is also solid, so $\sch(C) \subseteq \ch(\so(C))$. Thus, $\sch(C) = \ch(\so(C)) = \ch(\so(|C|)) = \sch(|C|)$. \\
(2) This follows from (1) and the equality in Proposition \ref{p:interchange}.
%
\end{proof}

\begin{remark}\label{r:shadow-not-closed}
The two examples below show that $\so(C)$ need not be closed, even if $C$ itself is.
Example (1) exhibits an unbounded closed set $C$ with $\so(C)$ not closed; in Example (2), $C$ is closed and bounded, but the ambient Banach lattice needs to be infinite dimensional.

(1) Let $X$ be a Banach lattice of dimension at least two, and consider disjoint norm one $e_1, e_2 \in \ball(X)_+$.  Let $C = \{ x_n : n \in \N\}$, where $x_n = \frac{n}{n+1}e_1 +ne_2$. Now, $C$ is norm-closed: if $m > n$, then $\|x_m-x_n\| \geq \|e_2\| = 1$. 
However, $\so(C)$ is not closed: it contains $r e_1$ for any $r \in (0,1)$, but not $e_1$.

(2) If $X$ is infinite dimensional, then there exists a closed \emph{bounded} $C \subset X_+$, for which $\so(C)$ is not closed. Indeed, find disjoint norm one elements $e_1, e_2, \ldots \in X_+$.
For $n \in \N$ let $y_n = \sum_{k=1}^n 2^{-k} e_k$ and $x_n = y_n + e_n$. Then clearly $\|x_n\| \leq 2$ for any $n$; further, $\|x_n - x_m\| \geq 1$ for any $n \neq m$, hence $C = \{x_1, x_2, \ldots\}$ is closed.
However, $y_n \in \so(C)$ for any $n$, and the sequence $(y_n)$ converges to $\sum_{k=1}^\infty 2^{-k} e_k \notin \so(C)$.
\end{remark}

However, under certain conditions we can show that the solid hull of a closed set is closed.

\begin{proposition}\label{p:conv_closed_KB}
A Banach lattice $X$ is reflexive if and only if, for any norm closed, bounded convex $C \subset X_+$, $\so(C)$ is norm closed. 
\end{proposition}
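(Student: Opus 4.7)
The plan is to handle each direction separately, with the forward implication following cleanly from reflexivity-as-weak-compactness and the converse requiring an explicit construction via the contrapositive.

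For the ``$X$ reflexive $\Rightarrow$ $\so(C)$ norm-closed'' direction, I would take a sequence $(z_n) \subseteq \so(C)$ converging in norm to some $z \in X$ and pick, for each $n$, an $x_n \in C$ with $|z_n| \leq x_n$. Since $C$ is norm-bounded and $X$ is reflexive, the Eberlein--\v{S}mulian theorem gives a subsequence $(x_{n_k})$ converging weakly to some $x \in X$. Because $C$ is norm-closed and convex it is weakly closed, so $x \in C$. Norm-continuity of the modulus yields $|z_{n_k}| \to |z|$ in norm and hence weakly, and since $x_{n_k} - |z_{n_k}| \in X_+$ with $X_+$ weakly closed, the weak limit $x - |z|$ lies in $X_+$. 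Thus $|z| \leq x \in C$, so $z \in \so(C)$.

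For the converse I would prove the contrapositive: assuming $X$ is not reflexive, I would construct a norm-closed, bounded, convex $C \subseteq X_+$ with $\so(C)$ not norm-closed. By the classical characterization of reflexive Banach lattices (see \cite{M-N}), non-reflexivity forces the existence of a disjoint norm-one positive sequence $(e_n)$ generating a sublattice lattice-isomorphic to either $c_0$ (the failure of order continuity) or $\ell_1$ (the failure of KB for $X^*$). I would handle these two cases separately, in each producing an explicit bounded sequence $(x_n) \subseteq X_+$ and setting $C := \overline{\mathrm{conv}}\{x_n\}$, then exhibiting a sequence $(z_n)$ with $z_n \leq x_n$ (so $z_n \in \so(C)$) such that $z_n \to z$ in norm but $z$ is not dominated by any element of $C$.

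The main obstacle is the construction itself. The direct lift of the non-convex example from Remark~\ref{r:shadow-not-closed}(2), namely $x_n = y_n + e_n$ with $y_n = \sum_{k=1}^n 2^{-k} e_k$ and $z_n = y_n$, breaks down for convex $C$: in both the $c_0$ and $\ell_1$ sublattice cases the series $w := \sum_n 2^{-n} x_n$ converges absolutely (since $\|x_n\|$ is bounded) to an element of $\overline{\mathrm{conv}}\{x_n\}$ that dominates the target $y = \lim y_n$ coordinate-wise, placing $y$ back into $\so(C)$. A successful choice of $(x_n)$ must therefore disperse the ``spikes'' $e_n$ onto coordinates spaced rapidly enough that no single infinite convex combination can dominate every coordinate of $y$ simultaneously--heuristically, dominating $y$ on each coordinate $k$ forces $\sum_{n \geq k} \lambda_n \to 1$ as $k \to \infty$, eventually incompatible with $\sum_n \lambda_n = 1$. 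Making this rigorous and verifying that \emph{every} potential dominator $w \in C$ (not merely the obvious candidate) fails to majorise $z$, uniformly across the $c_0$ and $\ell_1$ sublattice cases, is the technical heart of the argument.
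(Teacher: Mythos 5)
Your forward direction is correct and is essentially the paper's argument: pick dominating elements $y_n \in C$ for $|z_n|$, extract a weakly convergent subsequence by reflexivity, use that a norm-closed convex set is weakly closed to keep the limit in $C$, and pass the order inequality to the weak limit via weak closedness of $X_+$.

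The converse direction, however, has a genuine gap: you correctly set up the contrapositive, correctly invoke the disjoint positive sequence spanning $c_0$ or $\ell_1$, and correctly diagnose that the naive lift of Remark \ref{r:shadow-not-closed}(2) fails for convex $C$ (the absolutely convergent combination $\sum_n 2^{-n}x_n$ lands in $\overline{\ch}\{x_n\}$ and dominates the limit), but you then explicitly defer the actual construction --- ``making this rigorous \dots is the technical heart of the argument'' --- so no proof is actually given. Moreover, your heuristic as stated is shaky: if the spikes are merely dispersed, domination of $y=\sum_k 2^{-k}e_k$ on coordinate $k$ only forces $\lambda_{n_k}\gtrsim 2^{-k}$ for the generator carrying the spike at $k$, and $\sum_k 2^{-k}<\infty$, so no contradiction with $\sum_n\lambda_n=1$ arises without further care. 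The paper sidesteps all of this by targeting a \emph{single} coordinate rather than an infinitely supported limit. In the $c_0$ case it takes $C=\overline{\ch}\{x_n\}$ with $x_1=e_1/2$ and $x_n=(1-2^{-n})e_1+\sum_{j=2}^n e_j$ for $n\ge 2$; then $(1-2^{-n})e_1\in\so(C)$ for all $n$, so $e_1\in\overline{\so(C)}$, while Abel summation shows the $e_1$-coefficient of any convex combination equals $\tfrac12+\sum_{j\ge2}2^{-j}c_j$ where $c_j$ is exactly the $e_j$-coefficient of that same element; since any $x\in C$ lies in the $c_0$-sublattice, its coefficients $c_j$ tend to $0$, forcing the $e_1$-coefficient to be strictly less than $1$ and hence $e_1\notin\so(C)$. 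The $\ell_1$ case uses $x_n=(1-2^{-n})e_1+e_n$ and an explicit description of the closed convex hull. The auxiliary coordinates thus serve to \emph{record the convex weights} inside the limit element, which is the idea your proposal is missing; without it (or a comparable device) the claim that every $w\in C$ fails to dominate the limit remains unproved.
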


\begin{proof}
Support first $X$ is reflexive, and $C$ is a norm closed bounded convex subset of $X_+$.
Suppose $(x_n)$ is a sequence in $\so(C)$, which converges to some $x$ in norm; show that $x$ belongs to $\so(C)$ as well.
Clearly $|x_n| \rightarrow |x|$ in norm.  For each $n$ find $y_n \in C$ so that $|x_n| \leq y_n$.
By passing to a subsequence if necessary, we assume that the sequence $(y_n)$ converges to some $y \in X$ in the weak topology.
For convex sets, norm and weak closures coincide, hence $y$ belongs to $C$.
For each $n$, $\pm x_n \leq y_n$; passing to the weak limit gives $\pm x \leq y$, hence $|x| \leq y$.

Now suppose $X$ is not reflexive. By \cite[Theorem 4.71]{AB}, 
there exists a sequence of disjoint elements $e_i \in \sphere(X)_+$, equivalent to the natural basis of either $c_0$ or $\ell_1$.

First consider the $c_0$ case. Let $C$ be the closed convex hull of
$$
x_1 = \frac{e_1}{2}, \, \,
x_n = \big(1 - 2^{-n} \big) e_1 + \sum_{j=2}^n e_j \, \, (n \geq 2) .
$$
We shall show that any element of $C$ can be written as $c e_1 + \sum_{i=2}^\infty c_i e_i$, with $c < 1$ . This will imply that $\so(C)$ is not closed: clearly $e_1 \in \overline{\so(C)}$.

The elements of $\ch(x_1, x_2, \ldots)$ are of the form $\sum_{i=1}^\infty t_i x_i = c e_1 + \sum_{i=2}^\infty c_i e_i$; here, $t_i \geq 0$, $t_i \neq 0$ for finitely many values of $i$ only, and $\sum_i t_i = 1$. Note that $c_i = \sum_{j=i}^\infty t_i$ for $i \geq 2$ (so $c_i = 0$ eventually); for convenience, let $c_1 = \sum_{j=1}^\infty t_i = 1$. Then $t_i = c_i - c_{i+1}$; Abel's summation technique gives
$$
c = \sum_{i=1}^\infty \big( 1 - 2^{-i} \big) t_i = 1 - \sum_{i=1}^\infty 2^{-i} \big( c_i - c_{i+1} \big) = \frac12 + \sum_{j=2}^\infty 2^{-j} c_j .
$$

Now consider $x \in C$. Then $x$ is the norm limit of the sequence
$$  x^{(m)} = c^{(m)} e_1 + \sum_{i=2}^\infty c_i^{(m)} e_i \in \ch(x_1, x_2, \ldots)  ; $$
for each $m$, the sequence $(c_i^{(m)})$ has only finitely many non-zero terms, $c^{(m)} = \frac12 + \sum_{j=2}^\infty 2^{-j} c_j^{(m)}$, and for all $m,n \in \N$,  $|c_i^{(m)} - c_i^{(n)}| \leq \|x^{(m)} - x^{(n)}\|$. Thus, $x = c e_1 + \sum_{i=2}^\infty c_i e_i$, with
$c = \frac12 + \sum_{j=2}^\infty 2^{-j} c_j$.
As $0 \leq c_j \leq 1$, and $\lim_j c_j = 0$, we conclude that $c < 1$, as claimed.

Now suppose $(e_i)$ are equivalent to the natural basis of $\ell_1$.
Let $C$ be the closed convex hull of the vectors
$$
x_n = \big(1 - 2^{-n} \big) e_1 + e_n \, \, (n \geq 2) ,
$$
and show that $e_1 \in \overline{\so(C)} \backslash \so(C)$.
Note that
$$
C = \Big\{ \Big( \sum_{i=2}^\infty \big(1 - 2^{-n} \big) t_i \Big) e_1 + \sum_{i=2}^\infty t_i e_i : t_2, t_3, \ldots \geq 0 , \sum_{i=2}^\infty t_i = 1 \Big\} .
$$
Clearly $e_1$ belongs to $\overline{\so(C)}$, but not to $\so(C)$.
\end{proof}



\section{Separation by positive functionals}\label{s:separation}


Throughout the section, $X$ is a Banach lattice, equipped with a locally convex Hausdorff topology $\tau$.
This topology is called \emph{sufficiently rich} if the following conditions are satisfied:

\begin{enumerate}[(i)]
\item
The space $X^\tau$ of $\tau$-continuous functionals on $X$ is a Banach lattice (with lattice operations defined by Riesz-Kantorovich formulas).
\item
$X_+$ is $\tau$-closed.
\end{enumerate}

Note that (i) and (ii) together imply that positive $\tau$-continuous functionals separate points. That is, for every $x \in X \backslash \{0\}$ there exists $f \in X^\tau_+$ so that $f(x) \neq 0$.
Indeed, without loss of generality, $x_+ \neq 0$. Then $- x_+ \notin X_+$, hence there exists $f \in X^\tau_+$ so that $f(x_+) > 0$. By \cite[Proposition 1.4.13]{M-N}, there exists $g \in X^\tau_+$ so that $g(x_+) > f(x_+)/2$ and $g(x_-) < f(x_+)/2$. Then $g(x) > 0$.

Clearly, the norm and weak topologies are sufficiently rich; in this case, $X^\tau = X^*$.
The weak$^*$ topology on $X$, induced by the predual Banach lattice $X_*$, is sufficiently rich as well; then $X^\tau = X_*$.

\begin{proposition}[Separation]\label{p:separation2}
Suppose $\tau$ is a sufficiently rich topology on a Banach lattice $X$, and $A \subset X_+$ is a $\tau$-closed positive-solid bounded subset of $X_+$. Suppose, furthermore, $x \in X_+$ does not belong to $A$. Then there exists $f \in X^\tau_+$ so that $f(x) > \sup_{a \in A} f(a)$.
\end{proposition}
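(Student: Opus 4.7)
The plan is a two-step Hahn-Banach argument: first produce some $\tau$-continuous separating functional, then upgrade it to a positive functional by taking its positive part in $X^\tau$ (which is a Banach lattice by hypothesis~(i) of sufficient richness). Positive-solidity of $A$ enters only at the very end, when verifying that this upgrade does not spoil the separation.

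First I would observe that $0 \in A$: apply positive-solidity to $0 \le a$ for any $a \in A$. Consequently $\sup_A g \ge 0$ for every $g \in X^\tau_+$. Taking $A$ to be convex (a hypothesis that appears necessary: for the L-shape $A = [0,e_1]\cup[0,e_2]\subset \R^2_+$ and $x=(e_1+e_2)/2$, one checks $(a+b)/4 \le \max(a,b)/2$, ruling out any positive separating functional), the geometric Hahn-Banach theorem applied in the locally convex space $(X,\tau)$ produces $f \in X^\tau$ with $f(x) > \sup_A f \ge 0$.

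Now I would replace $f$ by $f_+ \in X^\tau$. The Riesz--Kantorovich formula gives, for $y \in X_+$,
\[
f_+(y) \;=\; \sup\{f(u) : 0 \le u \le y\}.
\]
Taking $u = x$ in this supremum at $y = x$ yields $f_+(x) \ge f(x) > \sup_A f$. Applying the same formula at each $a \in A$ and invoking positive-solidity --- every $u \in X_+$ with $u \le a$ already lies in $A$ --- I obtain
\[
\sup_{a \in A} f_+(a) \;=\; \sup\{f(u) : u \in A\} \;=\; \sup_A f.
\]
Combining, $f_+(x) > \sup_A f = \sup_A f_+$, as desired.

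I expect the only real obstacle is the convexity point flagged above: if the proposition genuinely covers non-convex positive-solid $A$, then the Hahn-Banach step must be applied to some convex enlargement (say $\overline{\mathrm{co}(A)}^\tau$) from which $x$ is still excluded, and I see no automatic reason for such exclusion absent convexity of $A$. Assuming $A$ convex, however, the plan above is a direct two-step argument, and the lattice upgrade via Riesz--Kantorovich is essentially routine once the preparatory observation $0\in A$ is in hand.
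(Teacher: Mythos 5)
Your argument is essentially the paper's own proof: the paper first separates $x$ from $A$ by an arbitrary $f\in X^\tau$ via Hahn--Banach, and then shows in a separate lemma that $\sup_{a\in A}f(a)=\sup_{a\in A}f_+(a)$ for positive-solid $A$ --- the same ``upgrade to $f_+$'' step you carry out, phrased there through the Riesz--Kantorovich formula for $f_+\wedge f_-=0$ rather than for $f_+$ itself. Your convexity objection is well taken, and it is not an artifact of your write-up: the statement in the paper does not assume $A$ convex, yet its proof also begins by invoking Hahn--Banach to separate $x$ from $A$, which requires $x\notin\overline{\ch(A)}^\tau$. Your example $A=[0,e_1]\cup[0,e_2]\subset\R^2_+$ with $x=(e_1+e_2)/2$ is a genuine counterexample to the unqualified statement (no linear functional at all separates, since $\sup_A f=\sup_{\ch(A)}f\geq f(x)$), so a convexity hypothesis --- or at least the hypothesis $x\notin\overline{\ch(A)}^\tau$ --- must be added. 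In the paper's applications (e.g.\ the solid Krein--Milman theorem) the set being separated from is a closed positive-solid \emph{convex} hull, so the downstream results are unaffected; but you are right that the proposition as printed needs the extra assumption, and with it your two-step argument is complete and correct.
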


\begin{lemma}\label{l:max}
Suppose $A$ and $X$ are as above, and $f \in X^\tau$. Then $\sup_{a \in A} f(a)$ $= \sup_{a \in A} f_+(a)$.
\end{lemma}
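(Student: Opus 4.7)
The plan is to prove both inequalities separately, exploiting the Riesz-Kantorovich formula afforded by condition (i) of sufficient richness together with the positive-solidness of $A$.

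The inequality $\sup_{a\in A} f(a) \leq \sup_{a\in A} f_+(a)$ is immediate: since $A \subset X_+$, for every $a \in A$ we have $f_-(a) \geq 0$ (because $f_- \in X^\tau_+$), and therefore $f(a) = f_+(a) - f_-(a) \leq f_+(a)$.

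For the reverse inequality, I would use that $X^\tau$ is a Banach lattice with lattice operations given by the Riesz-Kantorovich formulas, so that for any $a \in X_+$,
\[
f_+(a) = \sup\{ f(b) : 0 \leq b \leq a\}.
\]
Fix $\varepsilon > 0$ and $a \in A$. Choose $b \in X$ with $0 \leq b \leq a$ and $f(b) > f_+(a) - \varepsilon$. Now I invoke the positive-solidness hypothesis on $A$: since $b \in X_+$ and $b \leq a \in A$, we conclude $b \in A$. Therefore $\sup_{a' \in A} f(a') \geq f(b) > f_+(a) - \varepsilon$. Taking the supremum over $a \in A$ and then letting $\varepsilon \to 0$ yields $\sup_{a' \in A} f(a') \geq \sup_{a \in A} f_+(a)$.

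There is no real obstacle here; the only subtlety is recognizing that positive-solidness is precisely the hypothesis that lets one pass from the element $a \in A$ realizing (up to $\varepsilon$) the supremum defining $f_+(a)$ to the dominated element $b$, while keeping $b$ in $A$. Note that boundedness and $\tau$-closedness of $A$ are not used in this lemma; they will presumably enter in the proof of Proposition~\ref{p:separation2} itself.
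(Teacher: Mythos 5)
Your proof is correct and follows essentially the same route as the paper: both arguments apply a Riesz--Kantorovich formula to produce an element $0 \leq b \leq a$ with $f(b)$ close to $f_+(a)$, and then use positive-solidness of $A$ to conclude $b \in A$. The only (cosmetic) difference is that you invoke the formula for $f_+(a)$ directly, whereas the paper extracts the same near-maximizer from the identity $\bigl[f_+ \wedge f_-\bigr](a) = \inf_{0 \leq x \leq a}\bigl(f_+(a-x)+f_-(x)\bigr) = 0$; your observation that boundedness and $\tau$-closedness are not needed here is also accurate.
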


\begin{proof}
Clearly $\sup_{a \in A} f(a) \leq \sup_{a \in A} f_+(a)$. To prove the reverse inequality, write $f = f_+ - f_-$, with $f_+ \wedge f_- = 0$. Fix $a \in A$; then
$$
0 = \big[ f_+ \wedge f_- \big](a) = \inf_{0 \leq x \leq a} \big( f_+(a-x) + f_-(x) \big) .
$$
For any $\vr > 0$ we can find $x \in A$ so that $f_+(a-x), f_-(x) < \vr$. Then $f_+(x) = f_+(a) - f_+(a-x) > f_+(a) - \vr$, and therefore, $f(x) = f_+(x) - f_-(x) > f_+(a) - 2\vr$. Now recall that $\vr > 0$ and $a \in A$ are arbitrary.
\end{proof}

\begin{proof}[Proof of Proposition \ref{p:separation2}]
Use Hahn-Banach Theorem to find $f$ strictly separating $x$ from $A$. By Lemma \ref{l:max}, $f_+$ achieves the separation as well.
\end{proof}

\begin{remark}\label{r:lattice_needed}
In this paper, we do not consider separation results on general ordered spaces.
Our reasoning will fail without lattice structure. For instance, Lemma \ref{l:max} is false when $X$ is not a lattice, but merely an ordered space.
Indeed, consider $X = M_2$ (the space of real $2 \times 2$ matrices), $\displaystyle f = \begin{pmatrix} 1 & 0 \\ 0 & -1 \end{pmatrix}$, and $A = \{t a_0 : 0 \leq t \leq 1\}$, where
$\displaystyle a_0 = \begin{pmatrix} 1 & 1 \\ 1 & 1 \end{pmatrix}$; one can check that $A = \{x \in M_2 : 0 \leq x \leq a_0\}$. Then $f|_A = 0$, while $\displaystyle \sup_{x \in A} f_+(x) = 1$. 

The reader interested in the separation results in the non-lattice ordered setting is referred to 
an interesting result of \cite{FW}, recently re-proved in \cite{Ama}.
\end{remark}

\section{Solid convex hulls: theorems of Krein-Milman and Milman}\label{s:KM}

Throughout this section, the topology $\tau$ is assumed to be sufficiently rich (defined in the beginning of Section \ref{s:separation}).

\begin{theorem}[``Solid'' Krein-Milman]\label{t:KM}
Any $\tau$-compact positive-solid subset $A$ of $X_+$ coincides with the $\tau$-closed positive-solid convex hull of its order extreme points.
\end{theorem}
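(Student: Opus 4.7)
The plan is to adapt the classical Krein-Milman argument to the order setting, with the positive-functional separation of Proposition~\ref{p:separation2} playing the role of Hahn-Banach.

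Let $B$ denote the smallest $\tau$-closed, positive-solid, convex set containing $\oep(A)$. Since $A$ itself is $\tau$-closed, positive-solid, and (implicitly, for the conclusion to make sense) convex, we immediately have $B \subseteq A$. For the reverse inclusion I would argue by contradiction: pick $x \in A \setminus B$ and invoke Proposition~\ref{p:separation2} applied to the $\tau$-closed positive-solid bounded set $B$ to produce $f \in X^{\tau}_+$ with $f(x) > \sup_{b \in B} f(b)$. Set $M := \max_{a \in A} f(a)$, attained by $\tau$-compactness, and $A_f := \{a \in A : f(a) = M\}$, which is non-empty and $\tau$-compact.

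The heart of the argument is to show $A_f \cap \oep(A) \neq \emptyset$. Call a non-empty $\tau$-compact $F \subseteq A$ \emph{order-extremal in $A$} if, whenever $a \in F$, $x_0, x_1 \in A$, $t \in (0,1)$, and $a \leq (1-t)x_0 + tx_1$, one has $x_0, x_1 \in F$. I would verify: (a) $A_f$ is order-extremal in $A$, since applying $f \geq 0$ to the inequality yields $M = f(a) \leq (1-t) f(x_0) + t f(x_1) \leq M$, forcing equality and hence $x_0, x_1 \in A_f$; (b) the intersection of any chain of order-extremal $\tau$-compact subsets of $A_f$ is again non-empty (finite intersection property in a compact space) and order-extremal, as the defining condition is preserved under intersection; (c) for any $g \in X^{\tau}_+$ and any order-extremal $F$, the $g$-maximizer $F_g := \{a \in F : g(a) = \max_F g\}$ is again order-extremal in $A$, by the same positivity computation used in (a). Zorn's lemma then furnishes a minimal non-empty $\tau$-compact $F_0 \subseteq A_f$ that is order-extremal in $A$.

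To close the argument I would show $F_0$ is a singleton. If $F_0$ contained two distinct points $a, b$, the point-separation property of positive $\tau$-continuous functionals (noted just after the definition of a sufficiently rich topology) supplies $g \in X^{\tau}_+$ with $g(a) \neq g(b)$; then $(F_0)_g$ is a proper, non-empty, $\tau$-compact, order-extremal subset of $A_f$, contradicting minimality. Hence $F_0 = \{a_0\}$, and the order-extremal condition on the singleton $\{a_0\}$ reads verbatim as $a_0 \in \oep(A)$. Since $\oep(A) \subseteq B$, we would obtain $M = f(a_0) \leq \sup_B f < f(x) \leq M$, the desired contradiction. The main obstacle will be verifying the closure properties in (b) and (c) --- that order-extremality survives both intersections of chains and passage to $g$-maximizers. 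Both rely critically on the positivity of the functionals involved; indeed it is precisely the need to go from an inequality $a \leq (1-t)x_0 + t x_1$ (rather than an equality) back to information about $x_0, x_1$ that forces the separating and maximizing functionals to lie in $X^{\tau}_+$. Once those two closure lemmas are in place, the remainder is a direct translation of the classical Krein-Milman scheme.
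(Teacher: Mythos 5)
Your proposal is correct and follows essentially the same route as the paper: separation of $A\setminus B$ from $B$ by a positive functional via Proposition~\ref{p:separation2}, then a Zorn's-lemma argument on order-extremal subsets whose minimal elements are shown to be singletons (hence order extreme points) using point separation by positive functionals. The closure properties you flag in (b) and (c) go through exactly as you describe, and your parenthetical observation that convexity of $A$ is implicitly needed is a fair reading of the statement.
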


\begin{proof}
Let $A$ be a $\tau$-compact positive-solid subset of $X_+$.
Denote the $\tau$-closed positive convex hull of $\oep(A)$ by $B$; then clearly $B \subset A$.
The proof of the reverse inclusion is  similar to that of the ``usual'' Krein-Milman.

Suppose $C$ is a $\tau$-compact subset of $X$.
We say that a non-void closed $F \subset C$ is an \emph{order extreme subset} of $C$ if, whenever 
$x \in F$ and $a_1, a_2 \in C$ satisfy $x \leq (a_1 + a_2)/2$, then necessarily $a_1, a_2 \in F$.
The set ${\mathcal{F}}(C)$ of order extreme subsets of $C$ can be ordered by reverse inclusion
(this makes $C$ the minimal order extreme subset of itself). By compactness, each chain has an upper bound; therefore, by Zorn's Lemma, ${\mathcal{F}}(C)$ has a maximal element.
We claim that these maximal elements are singletons, and they are the order extreme points of $C$.

We need to show that, if $F \in {\mathcal{F}}(C)$ is not a singleton, then there exists $G \subsetneq F$ which is also an order extreme set. To this end, find distinct $a_1, a_2 \in F$, and $f \in X^\tau_+$ which separates them -- say $f(a_1) > f(a_2)$. Let $\alpha = \max_{x \in F} f(x)$, then $G = F \cap f^{-1}(\alpha)$ is a proper, order extreme subset of $F$. 

Suppose, for the sake of contradiction, that there exists $x \in A \backslash B$. Use Proposition \ref{p:separation2} to find $f \in X^\tau_+$ so that $f(x) > \max_{y \in B} f(y)$. Let $\alpha = \max_{x \in A} f(x)$, then $A \cap f^{-1}(\alpha)$ is an order extreme subset of $A$, disjoint from $B$. As noted above, this subset contains at least one extreme point. This yields a contradiction, as we started out assuming all order extreme points lie in $B$.
\end{proof}

\begin{corollary}\label{c:KM}
 Any $\tau$-compact solid subset of $X$ coincides with the $\tau$-closed solid convex hull of its order extreme points.
\end{corollary}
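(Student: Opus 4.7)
The plan is to reduce to Theorem \ref{t:KM} by restricting to the positive part of $A$. Given a $\tau$-compact solid $A \subset X$, observe that $A_+ = A \cap X_+$ is $\tau$-compact (it is the intersection of the $\tau$-compact set $A$ with the $\tau$-closed cone $X_+$) and positive-solid (if $x \in X_+$ and $z \in A_+$ satisfy $x \leq z$, then $|x| = x \leq z = |z|$, so solidity of $A$ places $x \in A$, hence in $A_+$). Theorem \ref{t:KM} applied to $A_+$ then yields $A_+ = \overline{\sch_+(\oep(A_+))}^\tau$, where $\sch_+$ denotes the positive-solid convex hull.

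The main step will be to identify $\oep(A)$ with $\oep(A_+)$. First I will note that every order extreme point of $A$ is automatically positive: if $a \in \oep(A)$, then $|a| \in A$ (by solidity) and $a \leq |a| = (|a|+|a|)/2$, forcing $a = |a| \geq 0$, so $\oep(A) \subset A_+$. The inclusion $\oep(A) \subset \oep(A_+)$ is then immediate. For the reverse, suppose $a \in \oep(A_+)$ and $x_0, x_1 \in A$ satisfy $a \leq (x_0+x_1)/2$. Solidity of $A$ gives $|x_0|, |x_1| \in A_+$, and $a \leq (|x_0|+|x_1|)/2$ forces $|x_0| = |x_1| = a$. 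Since then $x_i \leq |x_i| = a$, one has $x_0 + x_1 \leq 2a$; combined with the original inequality this gives $x_0 + x_1 = 2a$ and $x_i = a$, proving $a \in \oep(A)$.

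Once $\oep(A) = \oep(A_+)$ is established, the corollary follows quickly. For any $x \in A$, solidity places $|x| \in A_+$, and Theorem \ref{t:KM} puts $|x|$ in the $\tau$-closed positive-solid convex hull of $\oep(A)$, which is contained in $\csch(\oep(A))$; since this latter set is solid, it also contains $x$. Conversely, $\oep(A) \subset A$, and $A$ is $\tau$-closed, solid, and (as a $\tau$-compact solid set equal to a closed solid convex hull must be convex, which is the implicit convexity hypothesis parallel to the one in Theorem \ref{t:KM}) convex, so $\csch(\oep(A)) \subset A$.

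The step I expect to require the most care is the inclusion $\oep(A_+) \subset \oep(A)$, because a priori an order extreme point of $A_+$ is only tested against positive vectors, whereas membership in $\oep(A)$ requires the defining inequality to force equality for arbitrary $x_0, x_1 \in A$, possibly of mixed sign; the argument above works because solidity lets us bound the $x_i$ above by $|x_i|$ in $A_+$, and then squeezing between $x_0+x_1 \geq 2a$ and $x_0+x_1 \leq 2a$ recovers equality.
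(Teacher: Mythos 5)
Your proof is correct and is exactly the reduction the paper intends (the corollary is stated without proof, as an immediate consequence of Theorem \ref{t:KM}): pass to $A_+=A\cap X_+$, which is $\tau$-compact and positive-solid, identify $\oep(A)$ with $\oep(A_+)$ via solidity, and recover $A$ as the solid hull of $A_+$; your handling of the implicit convexity hypothesis matches the one already present in the proof of Theorem \ref{t:KM} (``then clearly $B\subset A$''). The one step I would ask you to justify further is the bare assertion that $\overline{\sch(\oep(A))}^\tau$ is solid: this is clear for the norm topology (norm continuity of the lattice operations) and hence for the weak topology (where the closures of convex sets coincide), but for a general sufficiently rich $\tau$ it needs an argument --- either check that the $\tau$-closure of a solid convex set $C$ is solid (using that $\sup_C f=\sup_{c\in C}|f|(|c|)$ for $f\in X^\tau$, so the relevant bipolar is solid), or sidestep the issue by reading the $\tau$-closed solid convex hull as the smallest $\tau$-closed solid convex set containing $\oep(A)$, which contains $\overline{\sch(\oep(A))_+}^\tau=A_+$ and therefore, being solid, all of $A=\so(A_+)$. (A very minor point: in verifying $\oep(A_+)\subseteq\oep(A)$ you only treat the coefficient $t=\tfrac12$; the identical squeeze between $(1-t)x_0+tx_1\geq a$ and $x_0,x_1\leq a$ works for arbitrary $t\in(0,1)$, which is what the definition requires.)
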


Of course, there exist Banach lattices whose unit ball has no order extreme points at all -- $L_1(0,1)$, for instance. However, an order analogue of \cite[Lemma 1]{Linl1} holds.

\begin{proposition}\label{p:some_vs_all}
For a Banach lattice $X$, the following two statements are equivalent:
\begin{enumerate}
 \item 
 Every bounded closed solid convex subset of $X$ has an order extreme point.
 \item
 Every bounded closed solid convex subset of $X$ is the closed solid convex hull of its order extreme points.
\end{enumerate}
\end{proposition}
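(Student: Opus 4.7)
The direction $(2) \Rightarrow (1)$ is immediate: since $\csch(\emptyset) = \emptyset$, any nonempty bounded closed solid convex set that equals $\csch$ of its order extreme points must itself have a nonempty set of order extreme points.

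My plan for $(1) \Rightarrow (2)$ is to follow Lindenstrauss's strategy (Lemma~1 of \cite{Linl1}) for the classical equivalence between ``every bounded closed convex set has an extreme point'' and the Krein--Milman Property. Let $C$ be bounded closed solid convex and set $B := \csch(\oep(C))$. Suppose for contradiction that $B \subsetneq C$, and pick $x_0 \in C \setminus B$. Solidity of $B$ and $C$ lets me replace $x_0$ by $|x_0|$ and assume $x_0 \in X_+$. Proposition~\ref{p:separation2} then supplies $f \in X^*_+$ with $f(x_0) > \sup_B f =: \beta$.

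The aim is to produce an order extreme point of $C$ on which $f$ (or a positive substitute $g$) exceeds $\beta$, contradicting $\oep(C) \subset B$. Following Lindenstrauss, I would first replace $f$ by a positive functional $g \in X^*_+$, chosen close enough to $f$ that still $g(x_0) > \sup_B g$, and whose maximum $M$ on $C$ is attained at some $y_0 \in C_+$ (positivity of $y_0$ is via Lemma~\ref{l:max}). Such a $g$ should be obtained from a positive version of the Bishop--Phelps density theorem, which one would derive by combining the classical Bishop--Phelps argument with a positive-part reduction in the dual lattice $X^*$. The face $F := \{y \in C : g(y) = M\}$ is then a nonempty bounded closed convex \emph{order face} of $C$ (whenever $y \in F$ and $y \leq (u+v)/2$ with $u, v \in C$, positivity of $g$ and maximality of $M$ force $u, v \in F$), and $F$ is disjoint from $B$.

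The main obstacle is that $F$ is not solid, so hypothesis (1) does not apply to $F$ directly; indeed, any solid subset of $C$ contains $0 \in B$, so there is no hope of finding a solid order face disjoint from $B$. The workaround is to apply (1) to the bounded closed solid convex set $\csch(F) \subset C$, obtaining an order extreme point $a \in \csch(F)_+$. The delicate step is then a lifting argument: using Theorem~\ref{t:connection} to pass to a classical extreme point of $\csch(F)$, and a Milman-type retrieval together with the order-face property of $F$, one shows that $a$ lies in $\overline{F}$ and is in fact order extreme in $C$. Since $g(a) = M > \beta$, this places $a$ in $\oep(C) \setminus B$, the desired contradiction. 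The transfer of order-extremeness from $\csch(F)$ back to $C$ is the technical heart of the argument and the step I expect to be the most subtle to execute.
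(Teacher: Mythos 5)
Your reduction and the first half of your plan track the paper's proof: pass to $|x_0|$, separate from $B$ by a positive functional via Proposition \ref{p:separation2}, perturb to a positive norm-attaining functional $g$ (the paper does exactly this, using Bishop--Phelps--Bollob\'as to get $f'$ attaining its maximum on $C$ and then setting $g=|f'|$, with $\|f-|f'|\|\le\|f-f'\|$ because $f\ge 0$, and Lemma \ref{l:max} / solidity of $C$ to get attainment at a positive point), and form the face $F=\{y\in C: g(y)=M\}$, which is an order extreme subset of $C$ disjoint from $B$. Up to that point your outline is sound and coincides with the paper's.

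The genuine gap is the final step, which you yourself flag as unexecuted. You propose to apply (1) to $\csch(F)$ and then ``lift'' an order extreme point of $\csch(F)$ back into $F$ by a Milman-type retrieval. As described, this would fail: Theorem \ref{t:order-milman} (like its classical counterpart) needs norm or weak compactness of $F$, and here $F$ is merely bounded, closed and convex in an arbitrary Banach lattice. Without compactness, an order extreme point $a$ of $\csch(F)$ is in particular a maximal positive element of $\csch(F)=\overline{\so(F)}$, but nothing forces $g(a)=M$ or $a\in F$: approximating $a$ by $x_n\in\so(F)$ gives dominating elements $y_n\in F$ with no convergent subnet to pass to, so the retrieval argument has nothing to grab onto, and the contradiction with $\oep(C)\subset B$ is never reached. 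For comparison, the paper handles this step by applying hypothesis (1) directly to the face $D=F$ itself and then promoting the resulting order extreme point of $D$ to one of $C$ via the order-face property; it does not detour through $\csch(F)$. Your observation that $F$ is not solid (so that (1), read literally, does not apply to it) is a legitimate concern, but replacing $F$ by its solid convex hull trades that difficulty for a harder one: what you need is an order extreme point of $C$ on which $g$ attains a value exceeding $\sup_B g$, not merely some order extreme point of a solid envelope of $F$, and your plan does not supply one.
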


\begin{proof}
(2) $\Rightarrow$ (1) is evident; we shall prove (1) $\Rightarrow$ (2).
Suppose $A \subset X$ is closed, bounded, convex, and solid. Let
$B = \csch(\oep(A))$
(which is not empty, by (1)). Suppose, for the sake of contradiction, that $B$ is a proper subset of $A$. Let $a \in A \backslash B$.  Since $B$ and $A$ are solid, $|a| \in A \backslash B$ as well, so without loss of generality we assume that $a \geq 0$. Then there exists $f \in \sphere(X^*)_+$ which strictly separates $a$ from $B$; consequently,
$$
\sup_{x \in A} f(x) \geq f(a) > \sup_{x \in B} f(x) .
$$
Fix $\varepsilon > 0$ so that
$$
2 \sqrt{2 \varepsilon} \alpha < \sup_{x \in A} f(x) - \sup_{x \in B} f(x) , \, \,
{\textrm{where}} \, \, \alpha = \sup_{x \in A} \|x\| .
$$
By Bishop-Phelps-Bollob\'as Theorem (see e.g. \cite{Boll} or \cite{CKMetc}), there exists $f' \in \sphere(X^*)$, attaining its maximum on $A$, so that $\|f - f'\| \leq \sqrt{2 \varepsilon}$.

Let $g = |f'|$, then $\|f - g\| \leq \|f - f'\| \leq \sqrt{2 \varepsilon}$. Further, $g$ attains its maximum on $A_+$, and $\max_{g \in A} g(x) > \sup_{x \in B} g(x)$.
Indeed, the first statement follows immediately from the definition of $g$. To establish the second one, note that the triangle inequality gives us
$$
\sup_{x \in B} g(x) \leq \sqrt{2 \varepsilon} \alpha + \sup_{x \in B} f(x) , \, \, \,
\sup_{x \in A} g(x) \geq \sup_{x \in A} f(x) - \sqrt{2 \varepsilon} \alpha . 
$$
Our assumption on $\varepsilon$ gives us $\max_{g \in A} g(x) > \sup_{x \in B} g(x)$.

Let $D = \{a \in A : g(a) = \sup_{x \in A} g(x)\}$. Due to (1), $D$ has an order extreme point which is also an order extreme point of $A$; this point lies inside of $B$, leading to the desired contradiction.
\end{proof}


Milman's theorem \cite[3.25]{Rud} states that, if both $K$ and $\overline{\ch(K)}^\tau$ are compact,
then $\ep\big(\overline{\ch(K)}^\tau\big) \subset K$.  An order analogue of Milman's theorem exists:


\begin{theorem}\label{t:order-milman}
Suppose $X$ is a Banach lattice.
\begin{enumerate}
\item
If $K \subset X_+$ and $\overline{\ch(K)}^\tau$ are $\tau$-compact, then $\oep\big(\overline{\sch(K)}^\tau\big) \subseteq K$.
\item
If $K \subset X_+$ is weakly compact, then $\oep(\csch(K)) \subseteq K$.
\item
If $K \subset X$ is norm compact, then $\oep(\csch(K)) \subseteq |K|$.
\end{enumerate}
\end{theorem}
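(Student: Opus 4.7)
The three parts stack: part (1) is the substantive claim, while (2) and (3) are short consequences. For (2), take $\tau$ to be the weak topology on $X$; it is sufficiently rich. The Krein--\v{S}mulian theorem makes $\overline{\ch(K)}^w$ weakly compact, and since $\sch(K)$ is convex, Mazur's theorem gives $\csch(K) = \overline{\sch(K)}^w$, so (1) applies. For (3), note that $|K|$ is the continuous image of $K$ under $|\cdot|$, hence norm-compact (and so weakly compact) and contained in $X_+$. Corollary \ref{c:equal-sch}(1) gives $\csch(K) = \csch(|K|)$, and applying (2) to $|K|$ yields $\oep(\csch(K)) = \oep(\csch(|K|)) \subseteq |K|$.

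The plan for (1) is to show that any $a \in \oep(\overline{\sch(K)}^\tau)$ already sits in $\overline{\ch(K)}^\tau$, and in fact is an ordinary extreme point of it; classical Milman \cite[3.25]{Rud} then delivers $a \in K$. Concretely, since $K \subseteq X_+$, Corollary \ref{c:equal-sch}(2) gives $\sch(K) = \so(\ch(K))$, so $a$ is a $\tau$-limit of a net $z_\alpha \in \sch(K)$ with $|z_\alpha| \leq y_\alpha$ for some $y_\alpha \in \ch(K)$. Using the $\tau$-compactness of $\overline{\ch(K)}^\tau$, I pass to a subnet along which $y_\alpha \to y \in \overline{\ch(K)}^\tau$. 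The inequalities $y_\alpha \pm z_\alpha \geq 0$ survive the $\tau$-limit because $X_+$ is $\tau$-closed (part of ``sufficiently rich''), so $|a| \leq y$, and in particular $a \leq y$.

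Now $y \in \overline{\ch(K)}^\tau \subseteq \overline{\sch(K)}^\tau$, so the order extremity of $a$, applied at $t = 1/2$ with $x_0 = x_1 = y$, forces $a = y$. Hence $a \in \overline{\ch(K)}^\tau$, and $|a| \leq a$ gives $a \geq 0$. To see $a$ is an ordinary extreme point of $\overline{\ch(K)}^\tau$, suppose $a = (z_1 + z_2)/2$ with $z_1, z_2 \in \overline{\ch(K)}^\tau \subseteq \overline{\sch(K)}^\tau$; then $a \leq (z_1 + z_2)/2$, and order extremity forces $z_1 = z_2 = a$. Classical Milman applied to the $\tau$-compact $K$ and $\overline{\ch(K)}^\tau$ closes the argument.

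The main obstacle, and the one delicate passage, is Step 2: converting the solid-hull description of $a$ into an upper bound by a single element of $\overline{\ch(K)}^\tau$. This is where both hypotheses of (1) are consumed: the compactness of $\overline{\ch(K)}^\tau$ to extract the subnet, and the $\tau$-closedness of $X_+$ to pass the inequalities $y_\alpha \pm z_\alpha \geq 0$ to the limit. Once $|a| \leq y \in \overline{\ch(K)}^\tau$ is in hand, everything reduces, by a single application of order extremity, to the classical Milman setup.
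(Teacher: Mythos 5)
Your proof is correct and takes essentially the same route as the paper: the paper's part (1) also reduces to classical Milman by showing $\overline{\sch(K)}^\tau \subseteq \so\big(\overline{\ch(K)}^\tau\big)$, so that order extreme points, being undominated, must lie in $\overline{\ch(K)}^\tau$; your subnet argument for $|a| \leq y$ is just an inlined version of the paper's Lemma \ref{l:compact_closed}, and parts (2) and (3) are handled identically (Krein's theorem plus Mazur, then passage to $|K|$ via Corollary \ref{c:equal-sch}).
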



The following lemma describes the solid hull of a $\tau$-compact set.

\begin{lemma}\label{l:compact_closed}
Suppose a Banach lattice $X$ is equipped with a sufficiently rich topology $\tau$.
If $C \subset X_+$ is $\tau$-compact, then $\so(C)$ is $\tau$-closed.
\end{lemma}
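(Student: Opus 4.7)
The plan is to chase a convergent net: take a net $(z_\alpha)$ in $\so(C)$ which $\tau$-converges to some $z \in X$, and show $z \in \so(C)$. By definition of $\so(C)$, for each $\alpha$ there exists $x_\alpha \in C$ with $|z_\alpha| \leq x_\alpha$, i.e.\ $-x_\alpha \leq z_\alpha \leq x_\alpha$, which in turn rewrites as $x_\alpha - z_\alpha \in X_+$ and $x_\alpha + z_\alpha \in X_+$.

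Next I would use the $\tau$-compactness of $C$: passing to a subnet of the index set, we may assume $x_\alpha \to x$ for some $x \in C$, while simultaneously $z_\alpha \to z$. Since $\tau$ is a linear topology, addition is $\tau$-continuous, so $x_\alpha - z_\alpha \to x - z$ and $x_\alpha + z_\alpha \to x + z$. Now property (ii) of a sufficiently rich topology says $X_+$ is $\tau$-closed, hence $x - z \geq 0$ and $x + z \geq 0$, i.e.\ $|z| \leq x$. As $x \in C$, this gives $z \in \so(C)$, as desired.

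The proof is short because we only need the linearity of $\tau$ together with $\tau$-closedness of $X_+$ (we never invoke $\tau$-continuity of the lattice operations, which is the part that would normally be delicate for general locally convex topologies on a Banach lattice). The only point requiring any care is the passage to a subnet: we pass to a subnet of $(x_\alpha)$ along which it converges in $C$, and carry along the corresponding subnet of $(z_\alpha)$, which still converges to $z$. So I don't foresee a real obstacle here; the lemma is essentially a reformulation of the positive cone being closed.
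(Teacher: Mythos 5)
Your argument is correct and is essentially the paper's own proof: both pass to a subnet along which the dominating elements $x_\alpha \in C$ converge (using $\tau$-compactness of $C$), and then conclude $\pm z \leq x$ in the limit from the $\tau$-closedness of $X_+$. You merely spell out the final step (continuity of addition plus closedness of the cone) a bit more explicitly than the paper does.
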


\begin{proof}
Suppose a net $(y_i) \subset \so(C)$ $\tau$-converges to $y \in X$. For each $i$ find $x_i \in C$ so that $|y_i| \leq x_i$ -- or equivalently, $y_i \leq x_i$ and $-y_i \leq x_i$. Passing to a subnet if necessary, we assume that $x_i \to x \in C$ in the topology $\tau$. Then $\pm y \leq x$, which is equivalent to $|y| \leq x$.
\end{proof}

\begin{proof}[Proof of Theorem \ref{t:order-milman}]
(1) We first consider a $\tau$-compact $K\subseteq X_+$.
Milman's traditional theorem holds that $\ep\big(\overline{\ch(K)}^\tau\big) \subseteq K$.  Every order extreme point of a set is extreme, hence the order extreme points of $\overline{\ch(K)}^\tau$ are in $K$. 
Therefore, by Lemma \ref{l:compact_closed} and Corollary \ref{c:equal-sch},
\[\overline{\sch(K)}^\tau = \overline{\so(\ch(K))}^\tau \subseteq \so\big(\overline{\ch(K)}^\tau\big) = \{x: |x| \leq y \in \overline{\ch(K)}^\tau \} . \] 
Thus, the points of $\overline{\sch(K)}^\tau \backslash \overline{\ch(K)}^\tau$  cannot be order extreme due to being dominated by $\overline{\ch(K)}^\tau$.
Therefore $\oep\big(\overline{\sch(K)}^\tau\big) \subseteq \oep\big(\overline{\ch(K)}^\tau\big) \subseteq K$.

(2) Combine (1) with Krein's Theorem (see e.g.~\cite[Theorem 3.133]{FHHMZ}), which states that $\overline{\ch(K)}^w = \overline{\ch(K)}$ is weakly compact.

(3) Finally, suppose $K\subseteq X$ is norm compact.  By Corollary \ref{c:equal-sch}, $\csch(K) = \csch(|K|)$. $|K|$ is norm compact, hence by \cite[Theorem 3.20]{Rud}, so is $\overline{\ch(|K|)}$. By the proof of part (1),  $\oep ( \csch(K) ) \subseteq |K|$.
\end{proof}

We turn our attention to
interchanging ``solidification'' and norm closure.
We work with the norm topology, unless specified otherwise.

\begin{lemma}\label{l:switch-close-solid}
	Let $C\subseteq X$, where $X$ is a Banach lattice, and suppose that $\so(\overline{|C|})$ is closed.  Then $ \overline{\so(C)} = \so(\overline{|C|})$. 
\end{lemma}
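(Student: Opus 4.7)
The containment $\overline{\so(C)} \subseteq \so(\overline{|C|})$ is essentially free. Since $\so(C) = \so(|C|)$ and $|C| \subseteq \overline{|C|}$, we have $\so(C) \subseteq \so(\overline{|C|})$, and the hypothesis that the right-hand side is closed gives the inclusion after taking closures.

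The real content is the reverse inclusion. Fix $y \in \so(\overline{|C|})$, so there exists $x \in \overline{|C|}$ with $|y| \leq x$ (note $x \geq 0$). Choose a sequence $c_n \in C$ with $|c_n| \to x$ in norm. The plan is to build approximants $y_n \in \so(C)$ converging to $y$ by truncating the positive and negative parts of $y$ against $|c_n|$. Specifically, set
\[
y_n := \bigl(y_+ \wedge |c_n|\bigr) - \bigl(y_- \wedge |c_n|\bigr).
\]
Since $y_+ \wedge y_- = 0$, the two summands above are disjoint, so $|y_n| = (y_+ \wedge |c_n|) + (y_- \wedge |c_n|) = (y_+ \wedge |c_n|) \vee (y_- \wedge |c_n|) \leq |c_n|$. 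Hence $y_n \in \so(C)$.

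It remains to prove $y_n \to y$ in norm. Using the standard Riesz identity $a - (a \wedge b) = (a - b)_+$, I compute
\[
y - y_n = \bigl(y_+ - y_+ \wedge |c_n|\bigr) - \bigl(y_- - y_- \wedge |c_n|\bigr) = (y_+ - |c_n|)_+ - (y_- - |c_n|)_+ .
\]
Because $y_+, y_- \leq |y| \leq x$, we have $(y_\pm - |c_n|)_+ \leq (x - |c_n|)_+ \leq \bigl| x - |c_n| \bigr|$, so $\|y - y_n\| \leq 2 \bigl\| x - |c_n| \bigr\| \to 0$. This produces $y \in \overline{\so(C)}$, completing the argument.

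The only delicate step is verifying $|y_n| \leq |c_n|$; the disjointness of $y_+$ and $y_-$ is what keeps the two truncations from overlapping and forces the sum to equal the supremum. The norm convergence then falls out of the elementary lattice identity and the a priori domination $y_\pm \leq x$. No further structural assumption on $X$ beyond being a Banach lattice is needed.
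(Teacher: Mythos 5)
Your proof is correct and follows essentially the same route as the paper: both approximate the target element by truncating its positive and negative parts against a sequence from $|C|$ converging to the dominating element, and both verify membership in $\so(C)$ via disjointness of the two truncations. The only cosmetic difference is that the paper concludes convergence by citing norm continuity of $\wedge$, while you give the explicit estimate $\|y-y_n\|\le 2\,\|x-|c_n|\|$ via the identity $a-a\wedge b=(a-b)_+$.
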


\begin{proof}	
One direction is easy: $\so(C)= \so(|C|) \subseteq \so(\overline{|C|})$, hence $\overline{\so(C)} \subseteq \overline{\so(\overline{|C|})} = \so(\overline{|C|})$. 

Now consider $x\in \so(\overline{|C|})$.  Then by definition, $|x| \leq y$ for some $y \in \overline{|C|}$.  Take $y_n \in |C|$ such that $y_n \rightarrow y$ .  Then $|x|\wedge y_n \in \so(|C|) = \so(C)$ for all $n$. Furthermore, \[ |x_+\wedge y_n -x_-\wedge y_n| =|x|\wedge y_n, \]
so, $x_+ \wedge y_n - x_-\wedge y_n \in \so(C)$.  By norm continuity of $\wedge$,  \[x_+\wedge y_n -x_-\wedge y_n \rightarrow x_+\wedge y -x_-\wedge y = x, \] hence $x \in \overline{\so(C)}$.
\end{proof}

\begin{remark}
	The assumption  of $\so(\overline{|C|})$ being closed is necessary: Remark \ref{r:shadow-not-closed} shows that, for a closed $C \subset X_+$, $\so(C)$ need not be closed.
\end{remark}

\begin{corollary}\label{c:switch-close-solid1}
	Suppose $C\subseteq X$is relatively compact in the norm topology.
	Then $\overline{\so(C)} = \so(\overline{C})$.
\end{corollary}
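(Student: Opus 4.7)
The strategy is to reduce the corollary to Lemma \ref{l:switch-close-solid}, whose hypothesis is that $\so(\overline{|C|})$ is norm-closed, and then to rewrite $\so(\overline{|C|})$ as $\so(\overline{C})$.

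First, I would verify that $\so(\overline{|C|})$ is closed. Since $C$ is relatively norm-compact and $|\cdot|$ is norm-continuous, the image $|C|$ is relatively norm-compact, so $\overline{|C|}$ is a norm-compact subset of $X_+$. The norm topology is sufficiently rich (as observed right after the definition in Section \ref{s:separation}), so Lemma \ref{l:compact_closed} applies and yields that $\so(\overline{|C|})$ is norm-closed. Lemma \ref{l:switch-close-solid} then gives $\overline{\so(C)} = \so(\overline{|C|})$.

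Next, I would identify $\so(\overline{|C|})$ with $\so(\overline{C})$. One always has $\so(D) = \so(|D|)$, so it suffices to show $\overline{|C|} = |\overline{C}|$. The inclusion $|\overline{C}| \subseteq \overline{|C|}$ is immediate from continuity of $|\cdot|$. For the reverse, take $y \in \overline{|C|}$ and write $y = \lim_n |c_n|$ with $c_n \in C$; by relative compactness of $C$ we may pass to a subsequence with $c_n \to c \in \overline{C}$, and then $y = |c| \in |\overline{C}|$. Combining, $\overline{\so(C)} = \so(\overline{|C|}) = \so(|\overline{C}|) = \so(\overline{C})$.

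The only mildly delicate point is the equality $\overline{|C|} = |\overline{C}|$, which is where relative compactness (rather than just closedness of $C$) is genuinely needed — Remark \ref{r:shadow-not-closed} shows that without compactness even $\so(\overline{C})$ can fail to be closed, so the corollary would break. Everything else is bookkeeping using the previously established lemmas.
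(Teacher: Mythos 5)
Your proof is correct and follows essentially the same route as the paper: both arguments combine Lemma \ref{l:compact_closed} (applied to the compact set $\overline{|C|}$) with Lemma \ref{l:switch-close-solid}, and both use compactness together with the continuity of $|\cdot|$ to establish $\overline{|C|}=|\overline{C}|$ before concluding $\overline{\so(C)}=\so(\overline{|C|})=\so(|\overline{C}|)=\so(\overline{C})$. The only cosmetic difference is that you verify $\overline{|C|}\subseteq|\overline{C}|$ by extracting a convergent subsequence, whereas the paper notes that $|\overline{C}|$ is compact, hence closed, and sandwiches $\overline{|C|}$ between $|\overline{C}|$ and $\overline{|\overline{C}|}=|\overline{C}|$.
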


\begin{proof}
The set $\overline{C}$ is compact, hence, by the continuity of $| \cdot |$, the same is true for $|\overline{C}|$. Consequently, $|\overline{C}| \subseteq \overline{|C|} \subseteq \overline{|\overline{C}|} = |\overline{C}|$, hence $|\overline{C}| = \overline{|C|}$.
By Lemmas \ref{l:compact_closed} and \ref{l:switch-close-solid}, $\so(\overline{C}) = \so(|\overline{C}|) = \so(\overline{|C|}) = \overline{\so(C)}$.
\end{proof}

\begin{remark}\label{r:closure_versus_modulus}
In the weak topology, the equality $|\overline{C}| = \overline{|C|}$ may fail. Indeed, equip the Cantor set $\Delta = \{0,1\}^\N$ with its uniform probability measure $\mu$.
Define $x_i \in L_2(\mu)$ by setting, for $t = (t_1, t_2, \ldots) \in \Delta$, $x_i(t) = t_i - 1/4$ (that is, $x_i$ equals to either $3/4$ or $-1/4$, depending on whether $t_i$ is $1$ or $0$).
Then $C = \{x_i : i \in \N\}$ belongs to the unit ball of $L_2(\mu)$, hence it is relatively compact. It is clear that $\overline{C}$ contains $\one/4$ (here and below, $\one$ denotes the constant $1$ function).
On the other hand, $\overline{C}$ does not contain $\one/2$, which can be witnessed by applying the integration functional. Conversely, $\overline{|C|}$ contains $\one/2$, but not $\one/4$.
\end{remark}

\begin{remark}\label{r:weakly_compact_solids}
Relative weak compactness of solid hulls have been studied before. If $X$ is a Banach lattice, then,
by \cite[Theorem 4.39]{AB}, 
it is order continuous iff the solid hull of any weakly compact subset of $X_+$ is relatively weakly compact. Further, by \cite{ChW}, the following three statements are equivalent:
\begin{enumerate}
 \item 
 The solid hull of any relatively weakly compact set is relatively weakly compact.
 \item
 If $C \subset X$ is relatively weakly compact, then so is $|C|$.
 \item
 $X$ is a direct sum of a KB-space and a purely atomic order continuous Banach lattice (a Banach lattice is called purely atomic if its atoms generate it, as a band).
\end{enumerate}
\end{remark}


Finally, we return to the connections between extreme points and order extreme points. As noted in the paragraph preceding Theorem \ref{t:connection}, a non-zero extreme point of a positive-solid set need not be order extreme. However, we have:

\begin{proposition}\label{p:under_oep}
Suppose $\tau$ is a sufficiently rich topology,
and $A$ is a $\tau$-compact positive-solid convex subset of $X_+$. Then for any extreme point $a \in A$ there exists an order extreme point $b \in A$ so that $a \leq b$.
\end{proposition}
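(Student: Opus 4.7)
The plan is to translate the problem into the setting of the solid Krein-Milman theorem (Theorem \ref{t:KM}). I would set $\Lambda = A \cap (a + X_+)$, a $\tau$-compact, convex, and nonempty subset of $A$ (it is nonempty because $a \in \Lambda$, $\tau$-compact as the intersection of the $\tau$-compact $A$ with the $\tau$-closed set $a + X_+$, and convex as the intersection of two convex sets). Let $\tilde{A} = \Lambda - a \subset X_+$; since $\tau$ is a vector topology, $\tilde{A}$ is still $\tau$-compact, convex, and nonempty. I would then verify that $\tilde{A}$ is positive-solid: if $0 \leq y \leq z$ with $z = c - a \in \tilde{A}$, then $0 \leq y + a \leq c \in A$, so positive-solidity of $A$ forces $y + a \in A$, and since $y + a \geq a$ one has $y + a \in \Lambda$ and hence $y \in \tilde{A}$. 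Theorem \ref{t:KM} applied to $\tilde{A}$ then produces some $d \in \oep(\tilde{A})$, and the candidate is $b := d + a \in \Lambda \subset A$.

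To show that $b$ is order extreme in $A$, suppose $b \leq (x_1 + x_2)/2$ with $x_1, x_2 \in A$. Since $a \leq b \leq (x_1+x_2)/2$, the Riesz Decomposition Property yields $u_1, u_2 \in X_+$ with $u_i \leq x_i$ and $u_1 + u_2 = 2a$; positive-solidity of $A$ forces $u_1, u_2 \in A$. The identity $a = (u_1 + u_2)/2$ combined with extremality of $a$ in $A$ then gives $u_1 = u_2 = a$, and in particular $x_1, x_2 \geq a$. Setting $y_i = x_i - a \in \tilde{A}$, the hypothesis $b \leq (x_1 + x_2)/2$ translates into $d \leq (y_1 + y_2)/2$, so order-extremality of $d$ in $\tilde{A}$ forces $y_1 = y_2 = d$, and thus $x_1 = x_2 = b$.

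The main conceptual hurdle is identifying the correct translate $\tilde{A}$ and checking that it inherits the positive-solidity of $A$; without this, the solid Krein-Milman machinery of Section \ref{s:KM} would not be directly applicable. The other subtle point is the use of extremality of $a$ (rather than merely membership in $A$) together with the Riesz decomposition to push $x_1, x_2$ above $a$: this is precisely where the extreme-point hypothesis on $a$ enters, and it is what allows the translation argument to close.
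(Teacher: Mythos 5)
Your proof is correct and follows essentially the same route as the paper's: both arguments extract an order extreme point from the set $B=\{x\in A:\ x\ge a\}$ and then use the extremality of $a$, via a decomposition of $2a\le x_1+x_2$ into positive pieces dominated by $x_1$ and $x_2$ (the paper writes $2a=(2a)\wedge x_1+\bigl(2a-(2a)\wedge x_1\bigr)$, which is exactly your appeal to the Riesz decomposition property), to conclude that any such $x_1,x_2$ lie above $a$. The only difference is one of packaging: you translate $B$ to the origin and check positive-solidity of $B-a$ so that Theorem~\ref{t:KM} can be invoked as a black box, whereas the paper reruns the Zorn's-lemma/extreme-subset argument from the proof of that theorem directly on $B$; both are valid.
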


\begin{remark}\label{r:for_domination_need_compactness}
The compactness assumption is essential. Consider, for instance, the closed set $A \subset C[-1,1]$, consisting of all functions $f$ so that $0 \leq f \leq \one$, and $f(x) \leq x$ for $x \geq 0$. Then $g(x) = x \vee 0$ is an extreme point of $A$; however, $A$ has no order extreme points.
\end{remark}

\begin{proof}
If $a$ is not an order extreme point, then we can find distinct $x_1, x_2 \in A$ so that $2a \leq x_1 + x_2$.
Then $2a \leq (x_1 + x_2) \wedge (2a) \leq x_1 \wedge (2a) + x_2 \wedge (2a) \leq x_1 + x_2$.
Write $2a = x_1 \wedge (2a) + (2a - x_1 \wedge (2a))$. Both summands are positive, and both belong to $A$ (for the second summand, note that $2a - x_1 \wedge (2a) \leq x_2$). Therefore, $x_1 \wedge (2a) = a = 2a - x_1 \wedge (2a)$, 
hence in particular $x_1 \wedge (2a) = a$.
Similarly, $x_2 \wedge (2a) = a$.
Therefore, we can write $x_1$ as a disjoint sum $x_1 = x_1' + a$ ($a, x_1'$ are quasi-units of $x_1$). In the same way, $x_2 = x_2' + a$ (disjoint sum).


Now consider the $\tau$-closed set $B = \{x \in A : x \geq a\}$. As in the proof of Theorem \ref{t:KM}, we show that the family of $\tau$-closed extreme subsets of $B$ has a maximal element; moreover, such an element is a singleton $\{b\}$.
It remains to prove that $b$ is an order extreme point of $A$. Indeed, suppose $x_1, x_2 \in A$ satisfy $2b \leq x_1 + x_2$. A fortiori, $2a \leq x_1 + x_2$, hence, by the preceding paragraph, $x_1, x_2 \in B$. Thus, $x_1 = b = x_2$.
%
\end{proof}

\section{Examples: AM-spaces and their relatives}\label{s:examples}


The following example shows that, in some cases, $\ball(X)$ is much larger than the closed convex hull of its extreme points, yet is equal to the closed solid convex hull of its order extreme points.

\begin{proposition}\label{p:fin_many}
For a Banach lattice $X$, $\ball(X)$ is the (closed) solid convex hull of $n$ disjoint non-zero elements if and only if $X$ is lattice isometric to $C(K_1) \oplus_1 \ldots \oplus_1 C(K_n)$ for suitable non-trivial Hausdorff compact topological spaces $K_1,...,K_n$.
\end{proposition}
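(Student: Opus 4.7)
The ``if'' direction is immediate: taking $e_i = (0,\ldots,\one_{K_i},\ldots,0)\in X$ with the constant function $\one_{K_i}$ in the $i$-th slot produces $n$ pairwise disjoint positive elements of norm one. For any $f=(f_1,\ldots,f_n)\in\ball(X)$ one has $\sum_i\|f_i\|_\infty\leq 1$, so choosing $t_i\geq\|f_i\|_\infty$ with $\sum_i t_i=1$ yields $|f|\leq\sum_i t_i e_i$; the reverse inclusion $\sch(\{e_1,\ldots,e_n\})\subseteq\ball(X)$ is clear since $\ball(X)$ is a solid convex set containing each $e_i$.

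For the ``only if'' direction, suppose $\ball(X)=\csch(\{x_1,\ldots,x_n\})$ with the $x_i$ pairwise disjoint and nonzero. By Corollary~\ref{c:equal-sch} I may replace each $x_i$ by $|x_i|$ and assume $x_i\in X_+$. Let $I_i$ be the principal ideal generated by $x_i$, equipped with the order-unit norm $\|y\|_i=\inf\{\lambda>0:|y|\leq\lambda x_i\}$; a routine argument using the closedness of order intervals in $X$ shows $(I_i,\|\cdot\|_i)$ is complete, and since it is an AM-space with strong unit $x_i$, Kakutani's representation theorem gives a lattice isometry $I_i\cong C(K_i)$ for a nonempty compact Hausdorff $K_i$. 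The crux of the proof is then the ``$\ell_1$'' norm identity
\[
\Big\|\sum_{i=1}^{n}y_i\Big\|_X \;=\; \sum_{i=1}^{n}\|y_i\|_i \qquad (y_i\in (I_i)_+).
\]
Granted this, disjointness of the $x_i$'s makes $I_1+\cdots+I_n$ an internal direct sum lattice isometric to $C(K_1)\oplus_1\cdots\oplus_1 C(K_n)$; Riesz decomposition shows every element of $\sch(\{x_1,\ldots,x_n\})$ lies in this sum, so the sum is dense in $X$, and the $\ell_1$ identity makes it complete, hence $X = I_1\oplus\cdots\oplus I_n$ as a lattice isometry.

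The main obstacle is the $\ell_1$-norm identity itself. The upper bound is the triangle inequality. For the lower bound, set $c=\|\sum_i y_i\|_X$ and write $(\sum_i y_i)/c=\lim_k z_k$ with $|z_k|\leq\sum_j t_j^{(k)} x_j$ and $\sum_j t_j^{(k)}=1$; Riesz-decompose $|z_k|=\sum_j u_j^{(k)}$ with $u_j^{(k)}=|z_k|\wedge t_j^{(k)} x_j$. Since each $y_i\in I_i$ is disjoint from $x_j$ for $j\neq i$, distributivity of $\wedge$ over disjoint positive sums collapses $|z_k|\wedge(y_i/c)$ to $u_i^{(k)}\wedge(y_i/c)\leq t_i^{(k)} x_i$. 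Passing to a subsequence with $t_j^{(k)}\to t_j$, $\sum_j t_j=1$, and using the norm-continuity of $\wedge$, I obtain $y_i/c\leq t_i x_i$, i.e.\ $\|y_i\|_i\leq c\,t_i$; summing over $i$ gives $\sum_i\|y_i\|_i\leq c$, as needed. Specializing to $y_i=x_i$ with all other $y_j=0$ also yields $\|x_i\|_X\geq\|x_i\|_i=1$, so $\|x_i\|_X=1$, justifying the normalization used above.
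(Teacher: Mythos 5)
Your proof is correct and follows essentially the same route as the paper's: identify the principal ideal generated by each $x_i$ as a $C(K_i)$ via Kakutani's theorem, and establish the $\ell_1$ decomposition through the disjointness computation $\big(\sum_j t_j x_j\big)\wedge(\cdot)\le t_i x_i$. The only real differences are cosmetic: you handle the closure of the solid convex hull by passing to limits $z_k$ (arguably more carefully than the paper, which simply observes that $\sch(x_1,\ldots,x_n)$ is already closed via compactness of $\ch(x_1,\ldots,x_n)$ and Lemma \ref{l:compact_closed}), and you deduce $\|x_i\|=1$ as a corollary of the $\ell_1$ identity rather than proving it up front.
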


\begin{proof}
Clearly, the only order extreme points of $\ball(C(K_1) \oplus_1 \ldots \oplus_1 C(K_n))$ are $\one_{K_i}$, with $1 \leq i \leq n$.

Conversely, suppose $\ball(X) = \csch(x_1, \ldots, x_n)$, where $x_1, \ldots, x_n \in \ball(X)_+$ are disjoint.
It is easy to see that, in this case, $\ball(X) = \sch(x_1, \ldots, x_n)$.
Moreover, $x_i \in \sphere(X)_+$ for each $i$. Indeed, otherwise there exists $i \in \{1, \ldots, n\}$ and $\lambda > 1$ so that $\lambda x_i \in \sch(x_1, \ldots, x_n)$, or in other words, $\lambda x_i \leq \sum_{j=1}^n t_j x_j$, with $t_j \geq 0$ and $\sum_j t_j \leq 1$. Consequently, due to the disjointness of $x_j$'s,
$$
\lambda x_i = (\lambda x_i) \wedge (\lambda x_i) \leq \big( \sum_{j=1}^n t_j x_j \big) \wedge (\lambda x_i) \leq
\sum_{j=1}^n (t_j x_j) \wedge (\lambda x_i) \leq t_i x_i ,
$$
which yields the desired contradiction.

Let $E_i$ be the ideal of $X$ generated by $x_i$, meaning the set of all $x \in X$ for which there exists $c > 0$ so that $|x| \leq c |x_i|$. Note that, for such $x$, $\|x\|$ is the infimum of all $c$'s with the above property.
Indeed, if $|x| \leq |x_i|$, then clearly $x \in \ball(X)$. Conversely, suppose $x \in \ball(X) \cap E_i$. In other words, $|x| \leq c x_i$ for some $c$, and also $|x| \leq \sum_j t_j x_j$, with $t_j \geq 0$, and $\sum_j t_j = 1$. Then $|x| \leq (c x_i) \wedge ( \sum_j t_j x_j ) = (c \wedge t_i) x_i$.
Consequently, $E_i$ (with the norm inherited from $X$) is an $AM$-space, whose strong unit is $x_i$. By \cite[Theorem 2.1.3]{M-N}, $E_i$ can be identified with $C(K_i)$, for some Hausdorff compact $K_i$. 

Further, Proposition \ref{p:interchange} shows that $X$ is the direct sum of the ideals $E_i$: any $y \in X$ has a unique disjoint decomposition $y = \sum_{i=1}^n y_i$, with $y_i \in E_i$. We have to show that $\|y\| = \sum_i \|y_i\|$.
Indeed, suppose $\|y\| \leq 1$.  Then $|y| = \sum_i |y_i| \leq \sum_j t_j x_j$, with $t_j \geq 0$, and $\sum_j t_j = 1$. Note that $\|y_i\| \leq 1$ for every $i$, or equivalently, $|y_i| \leq x_i$. Therefore,
$$
|y_i| = |y| \wedge x_i = \big( \sum_j t_j x_j \big) \wedge x_i = t_i ,
$$
which leads to $\|y_i\| \leq t_i$; consequently, $\|y\| \leq \sum_i t_i \leq 1$.
\end{proof}

\begin{example}\label{e:other_etreme_points}
For $X = (C(K_1) \oplus_1 C(K_2)) \oplus_\infty C(K_3)$, order extreme points of $\ball(X)$ are $\one_{K_1} \oplus_\infty \one_{K_3}$ and $\one_{K_2} \oplus_\infty \one_{K_3}$; $\ball(X)$ is the solid convex hull of these points. Thus, the word ``disjoint'' in the statement of Proposition \ref{p:fin_many} cannot be omitted.
\end{example}


Note that $\ball(C(K))$ is the closed solid convex hull of its only order extreme point -- namely, $\one_K$. This is the only type of AM-spaces with this property.

\begin{proposition}\label{p:describe_AM}
Suppose $X$ is an AM-space, and $\ball(X)$ is the closed solid convex hull of finitely many of its elements. Then $X = C(K)$ for some Hausdorff compact $K$.
\end{proposition}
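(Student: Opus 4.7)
The plan is to reduce to the single-generator case and then invoke Proposition \ref{p:fin_many} with $n=1$. Write $\ball(X) = \csch(y_1, \ldots, y_n)$. By Corollary \ref{c:equal-sch}, $\csch(y_1, \ldots, y_n) = \csch(|y_1|, \ldots, |y_n|)$, so without loss of generality I may assume $y_1, \ldots, y_n \in X_+$. If $X = \{0\}$ there is nothing to prove, so assume at least one $y_i \neq 0$.

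The key observation is that the AM property collapses the list into a single generator. Set $z = y_1 \vee \cdots \vee y_n$. Since $X$ is an AM-space and $\|y_i\| \leq 1$ for each $i$, we have $\|z\| = \max_i \|y_i\| \leq 1$, so $z \in \ball(X)_+$. As $y_i \leq z$, each $y_i$ lies in $\sch(z)$, so $\csch(y_1, \ldots, y_n) \subseteq \csch(z)$. Combined with $\csch(z) \subseteq \ball(X)$, I obtain
\[
\ball(X) = \csch(z).
\]
Moreover $z \neq 0$, since otherwise $\ball(X) = \{0\}$ contradicts $X \neq \{0\}$.

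Now apply Proposition \ref{p:fin_many} in the case $n=1$ (where the disjointness condition is vacuous): since $\ball(X)$ is the closed solid convex hull of a single non-zero positive element, $X$ is lattice isometric to $C(K)$ for some non-trivial Hausdorff compact $K$.

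I expect no real obstacle here. The only subtle point is making sure that the machinery of Proposition \ref{p:fin_many} applies with $n = 1$, and that passing from $y_i$ to $|y_i|$ and then to $\vee_i y_i$ is legitimate inside an AM-space; both are immediate. One could also observe directly why $n \geq 2$ is impossible for an AM-space: the decomposition $C(K_1) \oplus_1 \cdots \oplus_1 C(K_n)$ from Proposition \ref{p:fin_many} fails the AM identity $\|u \vee v\| = \max(\|u\|, \|v\|)$ on disjoint positive elements as soon as two summands are non-trivial, so the reduction to $n=1$ is forced.
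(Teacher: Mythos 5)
Your proof is correct and follows essentially the same route as the paper: both reduce to the single element $z = y_1 \vee \cdots \vee y_n$, using the AM-property to keep $z$ in the unit ball. The only cosmetic difference is the last step, where the paper observes directly that $z$ is a strong unit (so that $X=C(K)$ by Kakutani's representation theorem), whereas you invoke Proposition \ref{p:fin_many} with $n=1$ — which internally amounts to the same strong-unit argument.
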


\begin{proof}
Suppose $\ball(X)$ is the closed solid convex hull of $x_1, \ldots, x_n \in \ball(X)_+$. Then $x_0 := x_1 \vee \ldots \vee x_n \in \ball(X)_+$ (due to $X$ being an AM-space), hence $x \in \ball(X)$ iff $|x| \leq x_0$. Thus, $x_0$ is the strong unit of $X$.
\end{proof}

\begin{proposition}\label{p:only_C(K)_has_OEP}
If $X$ is an AM-space, and $\ball(X)$ has an order extreme point, then $X$ is lattice isometric to $C(K)$, for some Hausdorff compact $K$.
\end{proposition}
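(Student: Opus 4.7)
The plan is to show that any order extreme point of $\ball(X)$ is (essentially) a strong unit; once this is established, Kakutani's theorem (e.g.~\cite[Theorem 2.1.3]{M-N}) identifies $X$ with $C(K)$.

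First I would reduce to the case of a positive order extreme point. Let $a$ be an order extreme point of $\ball(X)$. Every order extreme point is extreme, and since $\ball(X)$ is solid, Theorem \ref{t:connection} implies that $|a|$ is also order extreme; so without loss of generality we may replace $a$ by $|a|$ and assume $a \in \ball(X)_+$. Recall from the introduction that, for any order extreme point $a$ of a set $A$, every $b \in A$ with $b \geq a$ must equal $a$ (write $a \leq (a+b)/2$). In particular, $\|a\| = 1$: otherwise $a/\|a\| \in \ball(X)$ would dominate $a$ strictly.

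Next, I would show that $a$ is a strong unit of $X$. Given $x \in \ball(X)_+$, form $x \vee a$. The defining property of an AM-space gives
\[
\|x \vee a\| = \max(\|x\|, \|a\|) \leq 1 ,
\]
so $x \vee a \in \ball(X)_+$. Since $x \vee a \geq a$, the observation from the previous paragraph forces $x \vee a = a$, i.e.~$x \leq a$. An arbitrary $x \in \ball(X)$ satisfies $|x| \in \ball(X)_+$, so $|x| \leq a$; therefore $\ball(X) \subseteq [-a, a]$, and the reverse inclusion is trivial.

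Finally, because $\ball(X) = [-a,a]$, the element $a$ is a strong unit of the AM-space $X$, and Kakutani's representation theorem for AM-spaces with strong unit \cite[Theorem 2.1.3]{M-N} yields a lattice isometry between $X$ and $C(K)$ for a suitable Hausdorff compact space $K$ (namely, $K$ is the set of lattice homomorphism states on $X$ equipped with the weak$^*$ topology). There is no real obstacle here; the only subtle point is the initial reduction to $a \geq 0$, which relies crucially on Theorem \ref{t:connection} together with the solidity of $\ball(X)$.
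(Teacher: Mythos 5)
Your proof is correct and follows essentially the same route as the paper: use the AM-property to see that $a \vee x$ stays in $\ball(X)_+$, invoke order extremality to force $x \leq a$, and conclude via Kakutani's representation theorem. (A small simplification: positivity of an order extreme point of a solid set is automatic, since $a \leq |a| \in \ball(X)$ already forces $a = |a|$, so the appeal to Theorem \ref{t:connection} is not needed.)
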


\begin{proof}
Suppose $a$ is order extreme point of $\ball(X)$. We claim that $a$ is a strong unit, which means that $a \geq x$ for any $x \in \ball(X)_+$. Suppose, for the sake of contradiction, that the inequality $a \geq x$ fails for some $x \in \ball(X)_+$. Then $b = a \vee x \in \ball(X)_+$ (due to the definition of an AM-space), and $a \leq (a+b)/2$, contradicting the definition of an order extreme point.
\end{proof}

We next consider norm-attaining functionals.
%
%
%
It is known that, for a Banach space $X$, any element of $X^*$ attains its norm iff $X$ is reflexive. 
If we restrict ourself to positive functionals on a Banach lattice, the situation is different: clearly every positive functional on $C(K)$ attains it norm at $\one$. Below we show that, among separable AM-spaces, only $C(K)$ has this property.

\begin{proposition}\label{p:only_C(K)_attain_norm}
Suppose $X$ is a separable AM-space, so that every positive linear functional attains its norm. Then $X$ is lattice isometric to $C(K)$.
\end{proposition}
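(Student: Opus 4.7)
The plan is to produce a strong unit for $X$; once this is in hand, Proposition~\ref{p:only_C(K)_has_OEP} (or directly the Kakutani representation) identifies $X$ lattice-isometrically with $C(K)$ for some compact Hausdorff $K$. I would fix a countable dense sequence $(z_n)$ in $\ball(X)_+$ and form the increasing sequence $y_n = z_1 \vee \cdots \vee z_n$, which stays in $\ball(X)_+$ because $X$ is an AM-space. If $(y_n)$ is norm-Cauchy, its limit dominates every $z_n$, hence --- by density of $(z_n)$ and closedness of the positive cone --- every element of $\ball(X)_+$, giving a strong unit. Thus the proof reduces to ruling out the case that $(y_n)$ fails to be Cauchy.

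Supposing $(y_n)$ is not Cauchy, I would pass to a subsequence with $\|y_{n_{k+1}} - y_{n_k}\| \geq \delta$ for some $\delta > 0$, writing $u_k := y_{n_{k+1}} - y_{n_k} \in X_+$. Using the positive version of Hahn--Banach, for each $k$ I would choose $g_k \in \sphere(X^*)_+$ with $g_k(u_k) = \|u_k\|$, and define $f = \sum_{k=1}^{\infty} 2^{-k} g_k \in X^*_+$. By hypothesis, $f$ attains its norm at some $y_0 \in \ball(X)_+$, and the goal is to contradict this.

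Two observations would drive the contradiction. First, for every $y \in \ball(X)_+$ the element $y_0 \vee y$ again lies in $\ball(X)_+$ (AM-property), and $f(y_0 \vee y) \geq f(y_0) = \|f\|$, forcing $f((y-y_0)_+) = 0$ and hence $g_k(y) \leq g_k(y_0)$ for all $y \in \ball(X)_+$ and every $k$. Specializing to $y = y_{n_m}$ and letting $m \to \infty$, the monotone weak$^*$ convergence $y_{n_m} \to e$ in $X^{**}$ (where $e$ is the strong unit of the AM-bidual) gives $g_k(y_0) \geq g_k(e) = \|g_k\| = 1$, so $g_k(y_0) = 1$ for every $k$. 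Second, fixing $\varepsilon \in (0, \delta)$ and using density to pick $z_j$ with $\|y_0 - z_j\| < \varepsilon$, for all sufficiently large $k$ one has $z_j \leq y_{n_k}$, whence $g_k(z_j) \leq g_k(y_{n_k}) = g_k(y_{n_{k+1}}) - g_k(u_k) \leq 1 - \delta$, and therefore $g_k(y_0) < 1 - \delta + \varepsilon < 1$, contradicting the first observation.

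The step I expect to require the most care is justifying the weak$^*$ convergence $y_{n_m} \to e$ in $X^{**}$. This uses that $e$ is the maximum of $\ball(X^{**})_+$, weak$^*$-closedness of $X^{**}_+$, and Goldstine-type density of $\ball(X)_+$ in $\ball(X^{**})_+$ to identify $\sup_n y_n$ with $e$ in the Dedekind-complete AM-space $X^{**}$.
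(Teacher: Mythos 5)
Your proof is correct, and it shares the paper's central device --- a positive functional $\sum_k 2^{-k} g_k$ assembled from norming functionals, exploited via the AM-property that finite suprema stay in $\ball(X)_+$ --- but the architecture is different. The paper norms the dense points $x_i$ themselves, checks that $x^* = \sum_i 2^{-i} x_i^*$ has norm $1$, and shows directly that any point where $x^*$ attains its norm must dominate all of $\ball(X)_+$ (otherwise $x^*$ would vanish on a nonzero positive element, which density of the $x_i$ forbids); there is no Cauchy/non-Cauchy dichotomy, and the strong unit is exhibited rather than obtained by contradiction. You instead norm the increments $u_k$ of a non-Cauchy subsequence and refute norm attainment, which works but costs you the case split and a bidual detour. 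On that last point: the step you flag as delicate (weak$^*$ convergence $y_{n_m} \to e$ in $X^{**}$) is avoidable. From $f\big((y-y_0)_+\big) = 0$ you already have $g_k(y) \leq g_k(y_0)$ for \emph{every} $y \in \ball(X)_+$, and since $\|g_k\| = \sup_{y \in \ball(X)_+} g_k(y)$ for a positive functional, this yields $g_k(y_0) = 1$ at once, with no reference to $X^{**}$. With that shortcut your argument is self-contained and comparable in length to the paper's, just organized around a contradiction instead of a direct construction. Two routine points deserve a line each: the positive norming functionals $g_k$ are obtained by replacing a Hahn--Banach functional $h$ for $u_k$ by $|h|$, and the extraction of a subsequence with $\|y_{n_{k+1}} - y_{n_k}\| \geq \delta$ uses monotonicity of the norm along the increasing sequence $(y_n)$.
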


\begin{proof}
Let $(x_i)_{i=1}^\infty$ be a dense sequence in $\sphere(X)_+$. For each $i$ find $x_i^* \in \ball(X^*_+)$ so that $x_i^*(x_i) = 1$. Let $x^* = \sum_{i=1}^\infty 2^{-i} x_i^*$. We shall show that $\|x^*\| = 1$. Indeed, $\|x^*\| \leq \sum_i 2^{-i} = 1$ by the triangle inequality. For the opposite inequality, fix $N \in \N$, and let $x = x_1 \vee \ldots \vee x_N$. Then $x \in \sphere(X)_+$, and 
$$
\|x^*\| \geq x^*(x) \geq \sum_{i=1}^N 2^{-i} x_i^*(x) \geq \sum_{i=1}^N 2^{-i} x_i^*(x_i) = \sum_{i=1}^N 2^{-i}   = 1 - 2^{-N} .
$$
As $N$ can be arbitrarily large, we obtain the desired estimate on $\|x^*\|$.

Now suppose $x^*$ attains its norm on $a \in \sphere(X)_+$. We claim that $a$ is the strong unit for $X$. Suppose otherwise; then there exists $y \in \ball(X)_+$ so that $a \geq y$ fails. Let $b = a \vee y$, then $z = b - y$ belongs to $X_+\backslash\{0\}$. Then $1 \geq x^*(b) \geq x^*(a) = 1$, hence $x^*(z) = 0$. However, $x^*$ cannot vanish at $z$. Indeed, find $i$ so that $\|z/\|z\| - x_i\| < 1/2$. Then $x_i^*(z) \geq \|z\|/2$, hence $x(z) > 2^{-i-1} \|z\| > 0$. This gives the desired contradiction.
\end{proof}

In connection to this, we also mention a result about norm-attaining functionals on order continuous Banach lattices.

\begin{proposition}\label{p:functional_OC}
An order continuous Banach lattice $X$ is reflexive if and only if every positive linear functional on it attains its norm.
\end{proposition}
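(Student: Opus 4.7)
The plan is to dispose of the easy direction first. If $X$ is reflexive, then $\ball(X)$ is weakly compact, every $f \in X^*$ attains its supremum there, and for positive $f$ the attaining point may be replaced by its modulus (since $f(|x|) \geq f(x)$ and $\||x|\| = \|x\|$), so the norm is attained at a positive element.

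For the converse, I would assume $X$ is order continuous and every positive functional attains its norm, and reduce matters to James's theorem: it suffices to show that every $g \in X^*$ attains its norm on $\ball(X)$. Applying the hypothesis to the positive functional $|g| = g_+ + g_-$ yields $c \in \ball(X)_+$ with $g_+(c) + g_-(c) = \|g\|$. The aim is then to build $y \in \ball(X)$ with $g(y) = \|g\|$ by decomposing $c$ onto the parts of $X$ where $g_+$ and $g_-$ respectively ``live''.

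The central technical step is a carrier-disjointness claim. Since $X$ is order continuous, $X$ is Dedekind complete and every $h \in X^*$ is order continuous, so the null ideals $N_{g_\pm} := \{x \in X : g_\pm(|x|) = 0\}$ are bands and their order-complements $C_\pm := N_{g_\pm}^\perp$ are projection bands, on which $g_\pm$ are strictly positive (i.e., $g_\pm(z) > 0$ for $z \in C_\pm^+ \setminus \{0\}$, since $C_\pm \cap N_{g_\pm} = \{0\}$). I would then argue that $g_+ \wedge g_- = 0$ in $X^*$ forces $C_+ \perp C_-$ in $X$: given $x \in (C_+ \cap C_-)_+$, the Riesz--Kantorovich formula produces $y_n \in [0,x]$ with $g_+(x-y_n) + g_-(y_n) < 1/n$, and weak compactness of the order interval $[0,x]$ (a standard consequence of order continuity) furnishes a weak cluster point $y_\infty$ satisfying $g_+(x-y_\infty) = 0 = g_-(y_\infty)$; strict positivity on the carriers then forces $y_\infty = x$ and $y_\infty = 0$, so $x = 0$.

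With $C_+ \perp C_-$ established, I would decompose $c = c_+ + c_- + c_0$ via the band projections onto $C_+$, $C_-$, and $(C_+ \oplus C_-)^\perp$. Because $g_+$ vanishes on $C_+^\perp \supseteq C_- + C_0$ and $g_-$ vanishes on $C_-^\perp \supseteq C_+ + C_0$, this yields $\|g\| = |g|(c) = g_+(c_+) + g_-(c_-)$. Setting $y := c_+ - c_-$, disjointness of $c_+, c_-$ gives $|y| = c_+ + c_- \leq c$, so $\|y\| \leq \|c\| \leq 1$, while $g(y) = g_+(c_+) - g_-(c_+) - g_+(c_-) + g_-(c_-) = g_+(c_+) + g_-(c_-) = \|g\|$. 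Thus $g$ attains its norm, and James's theorem yields reflexivity of $X$. The main obstacle is the carrier-disjointness argument; once that is in place, the rest is a short band-projection computation.
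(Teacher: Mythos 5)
Your proof is correct and follows essentially the same route as the paper: both arguments reduce to James's theorem and both convert a norm-attaining point of $|g|$ into one for $g$ by splitting it along the bands determined by the null ideals of $g_+$ and $g_-$ and flipping a sign. The only real difference is presentational (you argue directly, the paper contrapositively), plus the fact that you actually prove the carrier-disjointness $C_+\perp C_-$, which the paper uses implicitly in the identity $P_+^o+P_-^o=P^\perp$ without comment.
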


\begin{proof}
If an order continuous Banach lattice $X$ is reflexive, then clearly every linear functional is norm-attaining. If $X$ is not reflexive, then,
by the classical result of James, there exists $x^* \in X^*$ which does not attain its norm. We show that $|x^*|$ does not either.

Let $B_+ = \{x \in X : x^*_+(|x|) = 0\}$, and define $B_-$ similarly. As all linear functionals on $X$ are order continuous \cite[Section 2.4]{M-N}, $B_+$ and $B_-$ are bands \cite[Section 1.4]{M-N}. Due to the order continuity of $X$ \cite[Section 2.4]{M-N}, $B_{\pm}$ are  ranges of band projections $P_{\pm}$. Let $B$ be the range of $P = P_+ P_-$; let $B_+^o$ be the range of $P_+^o = P_+ P_-^\perp = P_+ - P$ (where we set $Q^\perp = I_X - Q$), and similarly for $B_-^o$ and $P_-^o$. Note that $P_+^o + P_-^o = P^\perp$.

Suppose for the sake of contradiction that $x \in \sphere(X)_+$ satisfies $|x^*|(x) = \|x^*\|$. Replacing $x$ by $P^\perp x$ if necessary, we assume that $P x = 0$, so $x = P_+^o x + P_-^o x$. Then $\|P_+^o x - P_-^o x\| = 1$, and
\begin{align*}
x^* \big( P_-^o x - P_+^o x \big)
&
= x^*_+ \big( P_-^o x \big) - x^*_+ \big( P_+^o x \big) - x^*_- \big( P_-^o x \big) + x^*_- \big( P_+^o x \big) \\
&
= x^*_+ \big( P_-^o x \big) + x^*_- \big( P_+^o x \big) = |x^*|(x) = \|x^*\| ,
\end{align*}
which contradicts our assumption that $x$ does not attain its norm.
\end{proof}

\section{On the number of order extreme points}\label{s:how_many_extreme_points}

It is shown in \cite{LP} that, if a Banach space $X$ is reflexive and infinite-dimensional Banach lattice, then $\ball(X)$ has uncountably many extreme points. Here, we establish a similar lattice result.

\begin{theorem}\label{t:uncount_many}
If $X$ is a reflexive infinite-dimensional Banach lattice, then $\ball(X)$ has uncountably many order extreme points.
\end{theorem}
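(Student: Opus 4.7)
The plan is to argue by contradiction. Suppose $\oep(\ball(X)) = \{a_n\}_{n \in \N}$ is countable. By Corollary \ref{c:KM}, $\ball(X) = \csch(\{a_n\})$, which forces $X$ to be separable (the closed solid convex hull of a countable set lies in a separable closed sublattice of $X$). The cited result \cite{LP} supplies uncountably many classical extreme points of $\ball(X)$. By Theorem \ref{t:connection}, the map $a \mapsto |a|$ sends $\ep(\ball(X))$ into $\oep(\ball(X))$; by the pigeonhole principle, some order extreme $e := a_{n_0}$ has an uncountable fiber. Each $a$ with $|a| = e$ corresponds bijectively to the component $a_+ \in \mathcal{B}(e)$ of $e$ (write $a = a_+ - a_-$ with $a_+ + a_- = e$ and $a_+ \wedge a_- = 0$), so the Boolean algebra of components $\mathcal{B}(e)$ is uncountable, and in particular contains an infinite antichain $(c_i)_{i \geq 1}$ of pairwise disjoint nonzero components. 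By order continuity (reflexive implies KB-space), $c^* := \sum_i c_i$ converges in norm and is itself a component of $e$.

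Next, I would extract an $\ell_p$-sublattice from $(c_i)$. Since $X$ is reflexive and hence contains no copy of $c_0$ or $\ell_1$ (cf.~\cite[Theorem 4.71]{AB}), a Kadec--Pe\l czy\'nski-type argument applied to the bounded disjoint sequence $(c_i)$ yields, after passing to a subsequence and normalizing, a disjoint sequence equivalent to the standard unit vector basis of some $\ell_p$ with $1 < p < \infty$. The closed ideal $Y$ it generates is (by order continuity of $X$) a projection band, so $X = Y \oplus Z$ with $Z = Y^d$; moreover $Y$ is a reflexive Banach lattice with an $\ell_p$-type disjoint basis, and inside $\ball(Y)$ uncountably many positive norm-one vectors $y$ are order extreme (by the usual $\ell_p$-strict monotonicity argument).

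The contradiction arises by lifting these to order extreme points of $\ball(X)$. If $Z = 0$, then $X = Y$ is $\ell_p$-like and the uncountably many order extreme points of $\ball(Y) = \ball(X)$ directly contradict the hypothesis. If $Z \neq 0$, fix a maximal $z^\ast \in \ball(Z)_+$ (existing by Zorn's lemma on the weakly compact $\ball(Z)_+$), and for each positive unit vector $y \in \sphere(Y)_+$ form $x_y := \lambda_y y + \mu_y z^\ast$ with scalars chosen so that $\|x_y\|_X = 1$. The order extremeness of $x_y$ in $\ball(X)$ will follow by decomposing any $(u+v)/2 \geq x_y$ with $u, v \in \ball(X)$ into $Y$- and $Z$-band parts, using the order extremeness of $y$ in $\ball(Y)$ to pin down the $Y$-components and the maximality of $z^\ast$ (together with the norm constraint $\|x_y\|_X = 1$) to pin down the $Z$-components; uncountably many distinct $y$'s then yield uncountably many distinct $x_y$'s. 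The main obstacle is the rigorous execution of this last step for reflexive lattices whose band sum $X = Y \oplus Z$ need not be strictly monotone (e.g., $\ell_p \oplus_\infty \ell_q$-type structures), where the correct ``combination'' $\lambda_y y + \mu_y z^\ast$ depends subtly on the behavior of $\|\cdot\|_X$ under the band decomposition.
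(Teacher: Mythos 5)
Your opening moves are correct and genuinely interesting: assuming countably many order extreme points, you combine the Lindenstrauss--Phelps theorem with Theorem \ref{t:connection} to find one order extreme point $e$ whose fiber $\{a \in \ep(\ball(X)) : |a|=e\}$ is uncountable, identify that fiber with the Boolean algebra of components of $e$, and extract an infinite disjoint sequence $(c_i)$ of components. (One parenthetical claim along the way is false but unused: $\ball(X)=\csch(\{a_n\})$ does \emph{not} force separability, since the solid hull of a single element can be nonseparable --- e.g.\ the order interval $[-\one,\one]$ in $L_2(\{0,1\}^\Gamma)$ contains uncountably many Rademacher-type functions at mutual distance $\sqrt 2$.) The argument breaks down after this point, in two places.

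First, the Kadec--Pe{\l}czy{\'n}ski step is not available: a normalized disjoint sequence in a reflexive Banach lattice need not have any subsequence equivalent to an $\ell_p$ basis. Tsirelson's space is a reflexive atomic Banach lattice (under the coordinate order of its $1$-unconditional basis) containing no copy of any $\ell_p$ or $c_0$; its unit vector basis is a normalized disjoint sequence with no $\ell_p$ subsequence. The Kadec--Pe{\l}czy{\'n}ski dichotomy is a theorem about $L_p(\mu)$, so the sublattice $Y$ you obtain is merely some reflexive atomic lattice, and the ``strict monotonicity of $\ell_p$'' argument producing uncountably many order extreme points of $\ball(Y)$ has nothing to run on. Second, the lifting step fails even where the first step succeeds. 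A maximal element of $\ball(Z)_+$ need not be order extreme: in $\R^2$ with the $\ell_1$ norm, $(1/2,1/2)$ is maximal in the positive part of the ball yet is dominated by (indeed equals) the average of $(1,0)$ and $(0,1)$. And even after replacing ``maximal'' by ``order extreme,'' a normalized combination $\lambda y+\mu z^*$ of order extreme points of the two band balls need not be order extreme in $\ball(X)$: in $\R\oplus_1\R$ no point $(\lambda,1-\lambda)$ with $0<\lambda<1$ is order extreme, since it is dominated by the average of the distinct ball elements $(\lambda\pm\varepsilon, 1-\lambda\mp\varepsilon)$. So the obstacle you flag at the end is not a technical subtlety but a counterexample to the lemma you would need.

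For contrast, the paper's proof avoids bands and $\ell_p$ copies entirely. It covers $\ball(X^*)_+$ by the weakly compact sets $F_n$ of positive functionals attaining their norm at $x_n$ (reflexivity plus Theorem \ref{t:connection} guarantee every positive functional attains its norm at some order extreme point), applies the Baire category theorem to find an $F_{n_0}$ with nonempty relative interior, and then perturbs a functional in that interior by a disjoint sequence of functionals supported away from the defining neighborhood, producing norm one positive functionals that must all lie in $F_{n_0}$ yet whose values at $x_{n_0}$ converge to something strictly less than $1$ --- a contradiction. If you want to salvage your approach, the dual/Baire route is the mechanism that replaces both of your missing steps.
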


Note that if $X$ is a reflexive infinite-dimensional Banach lattice, then Theorems \ref{t:connection} and \ref{t:uncount_many} imply that $\ball(X)$ has uncountably many extreme points, re-proving the result of \cite{LP} in this case.

\begin{proof}
Suppose, for the sake of contradiction, that there were only countably many such points $\{x_n\}$.  For each such $x_n$, we define 
$F_n =\{ f \in  \ball(X^*)_+ : f(x_n) = \| f\| \}$.
Clearly $F_n$ is weak$^*$ ($=$ weakly) compact.

By the reflexivity of $X$, any $f\in  \ball(X^*)$ attains its norm at some $x \in \ep(\ball(X))$. Since $f(x) \leq |f|(|x|)$ we assume that any positive functional attains its norm at a positive extreme point in $ \ball(X)$.  By Theorem \ref{t:connection}, these are precisely the order extreme points.  Therefore $\bigcup F_n =  \ball(X^*)_+$.  By the Baire Category Theorem, one of these sets $F_n$ must have non-empty interior in $ \ball(X^*)_+$.

Assume it is $F_1$.  Pick $f_0\in F_1 $, and $y_1,...,y_k \in X$, such that if $f \in  \ball(X^*)_+$ and for each $y_i$, $|f(y_i) - f_0(y_i) | < 1$, then $f \in F_1$.  Without loss of generality, we assume that $\|f_0 \| < 1$, and also that each $y_i \geq 0$.

Further, we can and do assume that there exist mutually disjoint $u_1, u_2, \ldots \in \sphere(X)_+$ which are disjoint from $y = \vee_i y_i$.
Indeed, find mutually disjoint $z_1, z_2, \ldots \in \sphere(X)_+$.  Denote the corresponding band projections by $P_1, P_2, \ldots$ (such projections exist, due to the $\sigma$-Dedekind completeness of $X$).
Then the vectors $P_n y$ are mutually disjoint, and dominated by $y$. As $X$ is reflexive, it must be order continuous, and therefore, $\lim_n \|P_n y\| = 0$. Find $n_1 < n_2 < \ldots$ so that $\sum_j \|P_{n_j} y\| < 1/2$. Let $w_i = \sum_j P_{n_j} y_i$ and $y'_i = 2(y_i - w_i)$.
Then if $|(f_0-g)(y'_i)|< 1$, with $g\geq 0, \|g\| \leq 1$, it follows that 
\begin{align*}
|(f_0 - g)(y_i)| & \leq \frac{1}{2} ( |(f_0 - g)(y'_i) | + |(f_0- g)(w_i) |) \\
&\leq \frac{1}{2} ( 1 + \|f_0 - g\| \|w_i\| ) <  \frac12(1+ 2\cdot \frac12 ) = 1
\end{align*}
We can therefore replace $y_i$ with $y'_i$ to ensure sufficient conditions for being in $F_1$.
Then the vectors $u_j = z_{n_j}$ have the desired properties.
Let $P$ be the band projection complementary to $\sum_j P_{n_j}$ (in other words, complementary to the the band projection of $\sum_j 2^{-j} u_j$); then $P y_i = y_i$ for any $i$.

By \cite[Lemma 1.4.3 and its proof]{M-N}, there exist linear functionals $g_j \in \sphere(X^*)_+$ so that $g_j(u_j) = 1$, and $g_j =P_{n_j}^* g_j$. Consequently, the functionals $g_j$ are mutually disjoint, 
and $g_j|_{\ran P} = 0$. 
For $j \in \N$ find $\alpha_j \in [1 - \|P^* f_0\|, 1]$ so that $\|f_j\| = 1$, where $f_j = P^* f_0 + \alpha_j g_j$. Then, for $1 \leq i \leq k$, $f_j(y_i) = (P^* f_0)(y_i) + \alpha_j g_j(y_i) = f_0(y_i)$, which implies that, for every $j$, $f_j$ belongs to $F_1$, hence attains its norm at $x_1$. 

On the other hand, note that $\lim_j g_j(x_1) = 0$. Indeed, otherwise, there exist $\gamma > 0$ and a sequence $(j_k)$ so that $g_{j_k}(x_1) \geq \gamma$ for every $k$. For any finite sequence of positive numbers $(\beta_k)$, we have
$$
\sum_k |\beta_k| \geq \big\| \sum_k \beta_k g_{j_k} \big\|  \geq \sum_k \beta_k g_{j_k} (x_1) \geq \gamma \sum_k |\beta_k| .
$$
As the functionals $g_{j_k}$ are mutually disjoint, the inequalities
$$
\sum_k |\beta_k| \geq \big\| \sum_k \beta_k g_{j_k} \big\| \geq \gamma \sum_k |\beta_k| 
$$
hold for every finite sequence $(\beta_k)$. We conclude that $\overline{\spn}[g_{j_k} : k \in \N]$ is isomorphic to $\ell_1$, which contradicts the reflexivity of $X$. Thus, $\lim_j g_j(x_1) = 0$, hence $\lim_j f_j (x_1) = f_0(P x_1) \leq \|f_0\| < 1$.
\end{proof}

\begin{corollary}
	Suppose $C$ is a closed, bounded, solid, convex subset of a reflexive Banach lattice, having non-empty interior.  Then $C$ contains uncountably many order extreme points.
\end{corollary}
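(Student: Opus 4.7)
The plan is to reduce the corollary to Theorem \ref{t:uncount_many} by renorming $X$ so that $C$ plays the role of the closed unit ball. The natural candidate is the Minkowski gauge
\[
\|x\|_C := \inf\{t > 0 : x \in tC\}.
\]

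First I would check that $\|\cdot\|_C$ is a norm on $X$ equivalent to the original one, with closed unit ball equal to $C$. The ingredients are standard: solidity of $C$ gives $\lambda C \subseteq C$ for every $|\lambda| \leq 1$ (since $|\lambda x| \leq |x|$), and in particular $C = -C$; combined with convexity and the nonempty interior of $C$, this places $0$ in the interior of $C$ (by averaging a ball around some $x_0$ with its reflection around $0$), so $\|\cdot\|_C$ is everywhere finite; boundedness of $C$ forces $\|\cdot\|_C$ to vanish only at $0$ and to dominate a multiple of the original norm; closedness of $C$ gives $\ball(X, \|\cdot\|_C) = C$.

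Next I would verify the crucial fact that $\|\cdot\|_C$ is a \emph{lattice} norm. If $|x| \leq |y|$ and $y \in tC$, then solidity applied to $y/t \in C$ yields $|y|/t \in C$, and a second application of solidity, using $|x|/t \leq |y|/t$, gives $x/t \in C$; taking infima shows $\|x\|_C \leq \|y\|_C$. With this in hand, $(X, \|\cdot\|_C)$ is a Banach lattice with the same underlying vector space and order as $X$, and since reflexivity is preserved under equivalent renorming, it remains an infinite-dimensional reflexive Banach lattice (we implicitly take $\dim X = \infty$, since the conclusion can fail in finite dimensions, e.g.\ for $C = \ball(\ell_\infty^n)$). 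Theorem \ref{t:uncount_many} then produces uncountably many order extreme points in $\ball(X, \|\cdot\|_C) = C$; because the notion of order extreme point depends only on the order structure, not on the particular equivalent norm, these are order extreme points of $C$ in the original sense.

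I do not foresee a substantive obstacle. The verification of the lattice-norm property is the one step that needs actual care, and it amounts to two direct applications of solidity.
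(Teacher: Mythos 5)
Your proposal is correct and follows essentially the same route as the paper: renorm $X$ by the Minkowski gauge of $C$, use solidity plus convexity plus nonempty interior to place $0$ in the interior of $C$ and to verify that the gauge is an equivalent lattice norm with unit ball $C$, and then invoke Theorem \ref{t:uncount_many}. Your explicit checks (that the gauge is a lattice norm, that reflexivity survives renorming, and that order extremality is a purely order-theoretic notion) are exactly the points the paper's proof relies on.
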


\begin{proof}
We assume without loss of generality that $\sup_{x \in C} \|x\| = 1$. Note that $0$ is an interior point of $C$. Indeed, suppose $x$ is an interior point. Pick $\varepsilon > 0$ such that $x + \vr \ball(X) \subset C$.
For any $k$ such that $\|k\| < \varepsilon$, we have $\frac{k}{2} = \frac{-x}{2} +\frac{x+k}{2} \in C$, since $C$ is solid and convex.  Hence $\frac{\varepsilon}{2} \ball(X) \subseteq C$.  Since $C$ is bounded, we can then define an equivalent norm, with $\|y\|_C = \inf \{\lambda > 0: y \in \lambda C  \}$.  Since $C$ is solid, $\|y \|_C = \| \  |y | \  \|_C$, and the norm is consistent with the order.  Finally, $\| \cdot \|_C$ is equivalent to $\| \cdot \|$, since for all $y\in X$, we have that $\frac{\vr}{2}\|y\|_C \leq \|y\| \leq \|y\|_C$.  The conclusion follows by Theorem \ref{t:uncount_many}.
\end{proof}


\section{The solid Krein-Milman Property and the RNP}\label{s:SKMP}

We say that a Banach lattice (or, more generally, an ordered Banach space) $X$ has the \emph{Solid Krein-Milman Property} (\emph{SKMP}) if every solid closed boun\-ded subset of $X$ is the closed solid convex hull of its order extreme points.
This is analogous to the canonical Krein-Milman Property (KMP) in Banach spaces, which is defined in the similar manner, but without any references to order. It follows from Theorem \ref{t:connection} that the KMP implies the SKMP.

These geometric properties turn out to be related to the Radon-Nikod{\'ym} Property (RNP).
It is known that the RNP implies the KMP, and, for Banach lattices, the converse is also true (see \cite{Cas} for a simple proof).
For more information about the RNP in Banach lattices, see \cite[Section 5.4]{M-N}; a good source of information about the RNP in general is \cite{Bour} or \cite{DU}.

One of the equivalent definitions of the RNP of a Banach space $X$ involves integral representations of operators $T : L_1 \to X$. If $X$ is a Banach lattice, then, by \cite[Theorem IV.1.5]{Sch}, any such operator is regular (can be expressed as a difference of two positive ones); so positivity comes naturally into the picture.

\begin{theorem}\label{t:RNP}
	For a Banach lattice $X$, the SKMP, KMP, and RNP are equivalent.
\end{theorem}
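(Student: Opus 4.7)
The plan is to verify the cycle of implications RNP $\Rightarrow$ KMP $\Rightarrow$ SKMP $\Rightarrow$ RNP. The classical arrow RNP $\Rightarrow$ KMP holds in any Banach space (\cite{Bour}, \cite{DU}), and its converse KMP $\Rightarrow$ RNP is Cascales's theorem \cite{Cas} for Banach lattices. So the content of this proof lies in the remaining two arrows.

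For KMP $\Rightarrow$ SKMP: let $C \subseteq X$ be closed, bounded, solid, and convex. By KMP, $C = \overline{\ch(\ep(C))}$. For each $x \in \ep(C)$, solidity of $C$ puts $|x| \in C$, and Theorem \ref{t:connection} then identifies $|x|$ as an order extreme point of $C$. Hence $\csch(\oep(C))$, being closed, solid, and convex and containing $|x|$, also contains $x$. This yields $C \subseteq \csch(\oep(C))$, while the reverse inclusion is immediate from $\oep(C) \subseteq C$.

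For SKMP $\Rightarrow$ RNP I would argue by contrapositive: assuming $X$ lacks RNP, I would exhibit a closed bounded solid convex subset of $X$ with no order extreme points, defeating SKMP. Since RNP is separably determined and passes to closed sublattices, I may assume $X$ is separable; replacing $X$ by the closed ideal generated by a weak unit $u \in X_+$, I may further assume $u$ is a weak unit of $X$. Define $H_n = \{x \in X_+ : \|u \wedge x\| > 1/n\}$ and call a bounded convex $A \subseteq X_+$ \emph{order dentable} if some slice of $A$ is contained in some $H_n$. A Bourgain--Talagrand type characterization (see \cite{Bour}) then produces, from the failure of RNP, a closed bounded convex $A \subseteq X_+$ that is not order dentable.

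From such an $A$ I would build $C = \overline{\sch(A)}$ and show it has no order extreme points. The scheme is: a hypothetical order extreme point $a \in C$ may be taken positive, since $a \leq (a + |a|)/2$ forces $a = |a|$; order extremeness then yields the maximality $a \leq c \in C \Rightarrow c = a$, and combined with a Milman-type approximation inside the solid hull this should pin $a$ inside $A$ itself. The non-order-dentability of $A$ can then be iterated into a bush/martingale construction in the spirit of Bourgain--Talagrand, with the lattice operation $u \wedge (\cdot)$ taking over the role that slicing functionals play in the classical argument, producing distinct $x_0, x_1 \in C$ with $a \leq (x_0 + x_1)/2$ and contradicting the order extremeness of $a$. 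The main obstacle I expect lies precisely in these last two steps: pinning a positive order extreme point of $C$ back inside the generating set $A$, and orchestrating the martingale splitting so that $u \wedge (\cdot)$ delivers adequate separation between $x_0$ and $x_1$ while all pieces of the construction stay inside the solid convex set $C$.
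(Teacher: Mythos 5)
Your treatment of the easy implications is fine: RNP $\Leftrightarrow$ KMP is classical plus \cite{Cas}, and your KMP $\Rightarrow$ SKMP argument (extreme points of a solid set have order extreme moduli by Theorem \ref{t:connection}, and $\csch(\oep(C))$, being solid, swallows each extreme point along with its modulus) is exactly the paper's route. The problems are all in SKMP $\Rightarrow$ RNP, where the two steps you flag as ``the main obstacle'' are indeed where the proof lives, and where your sketch as written would break down. First, a case is missing: the order-dentability machinery of Bourgain--Talagrand is only available when $X$ is a KB-space (the paper invokes \cite[Proposition 5.4.9]{M-N} and \cite[Corollary 5.4.20]{M-N}, both in that setting). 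If $X$ is not KB, the paper instead extracts a disjoint positive sequence equivalent to the $c_0$ basis and writes down an explicit closed bounded solid set --- essentially the solid hull of the ``$c_0$-null sup-norm sphere'' over those atoms --- in which every positive element can be strictly increased by bumping one coordinate up to $e_i$, so there are no order extreme points. Your proposal has no counterpart for this case.

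Second, in the KB case, the step ``a Milman-type approximation inside the solid hull should pin $a$ inside $A$ itself'' does not work: the order Milman theorem (Theorem \ref{t:order-milman}) requires compactness of $K$ and of its closed convex hull, and your $A$ is merely closed and bounded, so there is no reason an order extreme point of $\overline{\sch(A)}$ should lie in $A$ (indeed $\so(\overline{A})$ need not even be closed, cf.\ Remark \ref{r:shadow-not-closed}). The paper never localizes the putative extreme point back into $A$. Instead it first runs the Bourgain--Talagrand construction to produce a martingale $(Y_n)$ of $A$-valued simple functions whose values sit in slices disjoint from $H_n$, sets $C' = \overline{\ch(\{Y_n(t)\})}$ and $C = \overline{\so(C')}$, and then uses the martingale property together with \cite[Lemme 3]{BourTal} (the convex hull of finitely many closed bounded convex sets in this setting is closed) to write $C_+ = \ch(\cup_{i=1}^m C_i)$, where each $C_i$ is dominated by a slice avoiding $H_n$. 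An extreme point of $C_+$ would have to lie in some $C_i$, but any $x$ with $\|x\wedge u\| > 1/n$ is at positive distance from every $C_i$; and $x \wedge u \neq 0$ is forced by quasi-interiority of $u$. So the contradiction comes from a distance estimate against the slices, not from manufacturing a splitting $a \leq (x_0+x_1)/2$. Your outline correctly names the ingredients (separable reduction, weak unit, $H_n$, non-order-dentability, a martingale), but the argument that actually closes the loop is absent, and the one concrete mechanism you propose for it is unavailable without compactness.
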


\begin{proof}
The implications RNP $\Leftrightarrow$ KMP $\Rightarrow$ SKMP are noted above.
Now suppose $X$ fails the RNP (equivalently, the KMP). We shall establish the failure of the SKMP in two different ways, depending on whether $X$ is a KB-space, or not.

(1) If $X$ is not a KB-space, then \cite[Theorem 2.4.12]{M-N} there exist disjoint $e_1, e_2, \ldots \in \sphere(X)_+$, equivalent to the canonical basis of $c_0$. Then the set
$$
C = \overline{\so \Big( \big\{ \sum_i \alpha_i e_i : \max_i |\alpha_i| = 1 , \, \lim_i \alpha_i = 0 \big\} \Big)}
$$
is solid, bounded, and closed. To give a more intuitive description of $C$, for $x \in X$ we let $x_i = |x| \wedge e_i$. It is easy to see that $x \in C$ if and only if $\lim_i \|x_i\| = 0$, and $|x| = \sum_i x_i$. 
Finally, show that $x \in C_+$ cannot be an order extreme point.
Find $i$ so that $\|x_i\| < 1/2$, and consider $x' = \sum_{j \neq i} x_j + e_i$. Then clearly $x' \in C$,  and $x' - x \in X_+ \backslash \{0\}$.


(2) If $X$ is a KB-space failing the RNP, then, by \cite[Proposition 5.4.9]{M-N}, $X$ contains a separable sublattice $Y$ failing the RNP.
Find a quasi-interior point $u \in Y$ -- that is, $y = \lim_n y \wedge (nu)$ for any $y \in Y_+$.
By \cite[Corollary 5.4.20]{M-N}, $Y$ is not order dentable -- that is, $Y_+$ contains a non-empty convex bounded subset $A$ so that, for every $n \in \N$, $A = \overline{\ch(A \backslash H_n)}$, where $H_n = \{ y \in Y_+ : \|u \wedge y\| > \frac1n \}$.

We use the techniques (and notation) of \cite{BourTal} to construct a set $C$ witnessing the failure of the SKMP. 
For $f\in Y^*$, let $M(A,f) = \sup_{x\in A} |f(x)|$. For $\alpha > 0$, define the \emph{slice} $T(A,f,\alpha) = \{ x\in A: f(x) > M(A,f) - \alpha\}$.
By  \cite{BourTal}, we can construct increasing measure spaces $\Sigma_n$ on $[0,1]$ with $|\Sigma_n|$ finite, as well as $\Sigma_n$-measurable functions $Y_n:[0,1] \rightarrow A$, $f_n:[0,1]\rightarrow Y^*$, and $\alpha_n:[0,1] \rightarrow \R$ such that:
\begin{enumerate}
	\item For any $n$ and $t$, $Y_n(t) \in \overline{T(A, f_n(t), \alpha_n(t))}$.
	\item $(Y_n)$ is a martingale -- that is, $Y_n(t) = {\mathbb{E}}^{\Sigma_n}(Y_{n+1}(t))$, for any $t$ and $n$ (${\mathbb{E}}$ stands for the conditional expectation).
	\item For any $n$ and $t$, $H_n \cap \overline{T(A,f_n(t),\alpha_n(t)))} = \emptyset$. 
	\item For any $n$ and $t$, $T(A,f_{n+1}(t), \alpha_{n+1}(t)) \subseteq T(A,f_n(t), \alpha_n(t))$.
\end{enumerate}

Now let $C' = \overline{\ch(\{Y_n(t), n\in \N, t\in [0,1] \})}$, then the set $C = \overline{\so(C')}$ (the solid hull is in $X$) is closed, bounded, convex, and solid.
We will show that $C$ has no order extreme points.   
By Theorem \ref{t:connection}, it suffices to show that no $x \in C_+ \backslash \{0\}$ can be an extreme point of $C$, or equivalently, of $C_+ = C \cap X_+$.

From now on, fix $x \in C_+ \backslash \{0\}$. 
Note that $x \wedge u \neq 0$. Indeed, suppose, for the sake of contradiction, that $x \wedge u = 0$. Find $y' \in C' \subset Y_+$, so that $x \leq y'$. For any $n$, we have $y' \wedge (nu) = (y'-x) \wedge (nu) \leq y'-x$. Thus, $\|y' - y' \wedge (nu)\| \geq \|x\|$. However, $u$ is a quasi-interior point of $Y$, hence $y' = \lim_n y' \wedge (nu)$. 
This is the desired contradiction.

Find $n \in \N$ so that $\|x \wedge u\| > \frac1n$.
Let $I_1,..., I_m$ be the atoms of $\Sigma_n$. For $i \leq m$, define $C'_i = \overline{\ch(\{ Y_m(t):m \geq n, t\in I_i \})}$, and let $C_i = \overline{\so(C'_i)}_+$.

The sequence $(Y_k)$ is a martingale, hence $C' = \overline{\ch(\cup_{i=1}^m C'_i)}$.  Thus,
by Proposition \ref{p:interchange},
$$
C =  \overline{\so(C')} = \overline{\so(\overline{\ch(\cup_{i=1}^m C'_i))}} = \overline{\so(\ch(\cup_{i=1}^m C_i))} .
$$
By \cite[Lemme 3]{BourTal}, $\ch(\cup_{i=1}^m C_i)$ is closed. This set is clearly positive-solid, so by norm continuity of $| \cdot |$, $\so(\ch(\cup_1^m C_i))$ is closed, hence equal to $C$. In particular, $C_+ = \ch(\cup_{i=1}^m C_i)$.
Therefore, if $x$ is an extreme point of $C_+$, then it must belong to $C_i$, for some $i$.
We show this cannot happen. 

If $y \in \so(C'_i)_+$, then we can find $y' \in C_i'$ with $y \leq y'$. 
By parts (1) and (4), $C'_i \subseteq \overline{T(A, f_n(t), \alpha_n(t))}$ for $t\in I_i$, hence, by (3), $\|y' \wedge u\| \leq \frac1n$, which implies $\|y \wedge u\| \leq \frac1n$.
By the triangle inequality, 
$$
\|x \wedge u\| \leq \|y \wedge u\| + \|x-y\| \leq \frac1n + \|x-y\| .
$$
hence $\|x-y\| \geq \|x \wedge u\| - \frac1n$. Recall that $n$ is selected in such a way that $\|x \wedge u\| > \frac1n$. As $C_i = \overline{\so(C'_i)_+}$, it cannot contain $x$.
Thus, $C$ witnesses the failure of the SKMP.
\end{proof}

{\bf Acknowledgments.}
We would like to thank the anonymous referee for reading the paper carefully, and providing numerous helpful suggestions.
We are also grateful to Prof.~Anton Schep for finding an error in an earlier version of Proposition \ref{p:conv_closed_KB}. 


\begin{thebibliography}{22}



\bibitem{AB} C. Aliprantis and O. Burkinshaw.
Positive operators.
Springer, Berlin, 2006.

\bibitem{Ama} M. Amarante.
The Sandwich theorem via Pataraia’s fixed point theorem.
Positivity 23 (2019), 97--100.

\bibitem{ArL} R. Aron and V. Lomonosov.
After the Bishop-Phelps theorem.
Acta Comment. Univ. Tartu. Math. 18 (2014), 39--49. 




\bibitem{Boll} B. Bollob\'as.
An extension to the theorem of Bishop and Phelps.
Bull. London Math. Soc. 2 (1970), 181--182. 

\bibitem{BourTal} J.  Bourgain and  M.  Talagrand.
Dans un  espace de  Banach r\'eticul\'e solide, la  propri\'et\'e de  Radon-Nikodym et  celle de  Krein-Milman sont equivalentes. Proc. Amer. Math. Soc. 81  (1981), 93--96.

\bibitem{Bour} R. Bourgin.
Geometric aspects of convex sets with the Radon-Nikod{\'y}m property.
Springer, Berlin, 1983.

\bibitem{Cas} V. Caselles.
A short proof of the equivalence of KMP and RNP in Banach lattices and preduals of von Neumann algebras. Proc. Amer. Math. Soc. 102 (1988), 973-–974. 


\bibitem{ChW} Z.L. Chen and A. Wickstead.
Relative weak compactness of solid hulls in Banach lattices.
Indag. Math. (N.S.) 9 (1998), 187--196. 

\bibitem{CKMetc}
M. Chica, V. Kadets, M. Mart\'in, S. Moreno-Pulido, and F. Rambla-Barreno.
Bishop-Phelps-Bollob\'as moduli of a Banach space.
J. Math. Anal. Appl. 412 (2014), 697--719. 




\bibitem{DU} J. Diestel and J. Uhl.
Vector measures.  
American Mathematical Society, Providence RI, 1977. 

\bibitem{EWe} E. Effros and C. Webster.
Operator analogues of locally convex spaces, Operator algebras and applications.
NATO Adv. Sci. Inst. Ser. C Math. Phys. Sci., vol. 495, Kluwer Acad. Publ., Dordrecht, 1997, 163--207. 

\bibitem{EWi} E. Effros and S. Winkler.
Matrix convexity: operator analogues of the bipolar and Hahn-Banach theorems.
J. Funct. Anal. 144 (1997), 117--152. 

\bibitem{FHHMZ} M. Fabian, P. Hajek, P. Habala, V. Montecinos, V. Zizler.
Banach space theory. The basis for linear and nonlinear analysis.
Springer, New York, 2011.

\bibitem{FaM} D. Farenick and P. Morenz.
$C^*$-extreme points in the generalised state spaces of a $C^*$-algebra. 
Trans. Amer. Math. Soc. 349 (1997), 1725--1748.





\bibitem{FW} B. Fuchssteiner and J.D. Maitland Wright.
Representing isotone operators on cones. Q. J. Math. 28 (1977), 155–-162.






\bibitem{Linl1} J. Lindenstrauss.
On extreme points in $\ell_1$.
Israel J. Math. 4 (1966), 59--61. 

\bibitem{LP} J. Lindenstrauss and R. Phelps.
Extreme point properties of convex bodies in reflexive Banach spaces.
Israel J. Math. 6 (1968), 39--48. 




\bibitem{Mag} B. Magajna.
$C^*$-convex sets and completely positive maps.
Integral Equations Operator Theory 85 (2016), 37--62. 


\bibitem{M-N} P. Meyer-Nieberg.
Banach lattices. Springer, Berlin, 1991.




\bibitem{PR} M. Popov and B. Randrianantoanina.
Narrow operators on function spaces and vector lattices.
Walter de Gruyter, Berlin, 2013.


\bibitem{Rud} W. Rudin.
Functional analysis.
McGraw-Hill, Singapore, 1991.

\bibitem{Sch} H.H. Schaefer.
Banach lattices and positive operators.
Springer, New York--Heidelberg, 1974.

\bibitem{WeWi} C. Webster and S. Winkler.
The Krein-Milman theorem in operator convexity.
Trans. Amer. Math. Soc. 351 (1999), 307--322. 


\end{thebibliography}
\end{document}